\definecolor{darkblue}{HTML}{0000AD}
\definecolor{darkgreen}{HTML}{008600}
\definecolor{darkred}{HTML}{8B0000}
\definecolor{darkgray}{HTML}{666666}
\definecolor{_mage}{HTML}{912830}
\definecolor{_cyan}{HTML}{31837a}
\definecolor{_purp}{HTML}{49425c}
\DeclareMathOperator{\id}{id}
\DeclareMathOperator{\derivoper}{d}
\DeclareMathOperator{\intoper}{int}
\DeclareMathOperator{\totvaroperator}{V}
\let\originalleft\left
\let\originalright\right
\DeclareRobustCommand{\left}{\mathopen{}\mathclose\bgroup\originalleft}
\DeclareRobustCommand{\right}{\aftergroup\egroup\originalright}
\newcommand{\R}{\mathbb{R}}
\newcommand{\N}{\mathbb{N}}
\renewcommand{\epsilon}{\varepsilon}
\renewcommand{\phi}{\varphi}
\newcommand{\abs}[1]{\left\lvert #1\right\rvert}
\newcommand{\absb}[1]{\bigl\lvert #1\bigr\rvert}
\newcommand{\norm}[1]{\left\lVert #1\right\rVert}
\newcommand{\normb}[1]{\bigl\lVert #1\bigr\rVert}
\newcommand{\set}[1]{\left\{ #1\right\}}
\newcommand{\setb}[1]{\bigl\{ #1\bigr\}}
\newcommand{\setB}[1]{\Bigl\{ #1\Bigr\}}
\newcommand{\setbb}[1]{\biggl\{ #1\biggr\}}
\newcommand{\paren}[1]{\left( #1\right)}
\newcommand{\parenb}[1]{\bigl( #1\bigr)}
\newcommand{\parenB}[1]{\Bigl( #1\Bigr)}
\newcommand{\sqparen}[1]{\left[ #1\right]}
\newcommand{\floor}[1]{\left\lfloor #1\right\rfloor}
\newcommand{\diff}[1]{\derivoper \!#1}
\newcommand{\rxsymbol}{\mathnormal{\epsilon}}
\newcommand{\stepsymbol}{\mathnormal{h}}
\newcommand{\rx}[1]{{#1^\rxsymbol}}
\newcommand{\rxcustom}[2]{{#1^{#2}}}
\newcommand{\dapprox}[1]{#1^{\rxsymbol,\stepsymbol}}
\newcommand{\dalgo}[1]{#1^\stepsymbol}
\newcommand{\dalgocustom}[2]{#1^{#2}}
\newcommand{\simfunc}[1]{\stackrel{#1}{\sim}}
\newcommand{\equivclass}[2]{\sqparen{#1}_{#2}}
\newcommand{\bdry}[1]{\partial #1}
\newcommand{\setint}[1]{\intoper(#1)}
\newcommand{\indmetric}[1]{\widetilde{d}_{#1}}
\newcommand{\totvar}[1]{\totvaroperator\paren{#1}}
\newcommand{\bigo}[1]{O\paren{#1}}
\newcommand{\into}{\rightarrow}
\newcommand{\goesto}{\rightarrow}
\newcommand{\etal}{et al.}
\newcommand{\see}[1]{(see~#1)}
\newcommand{\sam}[1]{}
\newcommand{\humberto}[1]{}
\newcommand{\ram}[1]{}
\newenvironment{enumparen}{%
  \begin{enumerate}[label=(\arabic*)]%
}{%
  \end{enumerate}%
}
\newenvironment{newitemize}{%
  \begin{itemize}%
}{%
  \end{itemize}%
}
\newtheorem{theorem}{Theorem}
\newtheorem{corollary}[theorem]{Corollary}
\newtheorem{lemma}[theorem]{Lemma}
\newtheorem{assumption}[theorem]{Assumption}
\newtheorem{definition}[theorem]{Definition}
\begin{document}
\title{Metrization and Simulation of Controlled Hybrid Systems}

\author{Samuel~A.~Burden,~\IEEEmembership{Student,~IEEE}, %
  Humberto~Gonzalez,~\IEEEmembership{Member,~IEEE}, %
  Ramanarayan~Vasudevan,~\IEEEmembership{Member,~IEEE}, %
  Ruzena~Bajcsy,~\IEEEmembership{Fellow,~IEEE}, %
  and S.~Shankar~Sastry,~\IEEEmembership{Fellow,~IEEE}%
  \thanks{S. Burden was supported in part by an NSF Graduate Research Fellowship.
    Part of this research was sponsored by the Army Research Laboratory under Cooperative Agreement W911NF-08-2-0004.%
  }
  \thanks{S.~Burden, R.~Bajcsy, and S.~S.~Sastry are with the University of California at Berkeley, EECS Dept. Email: \texttt{\protect\url{sburden,bajcsy,sastry@eecs.berkeley.edu}}.}%
  \thanks{H.~Gonzalez is with the Washington University in St.\ Louis, ESE Dept. Email: \texttt{\protect\url{hgonzale@ese.wustl.edu}}.}%
  \thanks{R.~Vasudevan is with the Massachusetts Institute of Technology, CSAIL\@. Email: \texttt{\protect\url{ramv@csail.mit.edu}}.}%
}

\maketitle

\begin{abstract}
The study of controlled hybrid systems requires practical tools for approximation and comparison of system behaviors.
Existing approaches to these problems impose undue restrictions on the system's continuous and discrete dynamics.
Metrization and simulation of controlled hybrid systems is considered here in a unified framework by constructing a state space metric.
The metric is applied to develop a numerical simulation algorithm that converges uniformly, with a known rate of convergence, to orbitally stable executions of controlled hybrid systems, up to and including Zeno events.
Benchmark hybrid phenomena illustrate the utility of the proposed tools.
\end{abstract}

\section{Introduction}
\label{sec:Introduction}

For continuous--state dynamical systems and finite--state automata there exist rich sets of tools for metrization and simulation.
The interaction of discrete transitions with continuous dynamics introduces subtleties that render the development of similar tools for controlled hybrid systems non-trivial.
Consider the time evolution of a pair of states $(\xi_1,\xi_2)\in\R^2$. Suppose that when either quantity crosses zero a discontinuous change in the time derivatives $(\dot{\xi}_1,\dot{\xi}_2)$ is triggered yielding a discontinuous planar vector field as in Fig.~\ref{fig:dcs}.
A faithful model of the system's full state evolution is \emph{hybrid}, representing both discrete and continuous state transitions, as in Fig.~\ref{fig:chs}.


The choice of metric for this controlled hybrid system dictates exactly the type of trajectories that can be faithfully simulated.
For example, existing trajectory--space metrics impose at least unit distance between any states that reside in different discrete modes~\cite{Tavernini1987, Gokhman2008, Tavernini2009, SanfeliceTeel2010}.
As a result, simulation algorithms based on these metrics cannot provably approximate executions that undergo simultaneous discrete transitions (e.g. $x$) since nearby executions encounter different discrete transition sequences  (e.g. $y_\delta$, $z_\delta$).

\def\quadrant#1#2{
  \begin{scope}[shift={#1},rotate={#2}]
    \draw[<->,line width=1] (0,2) -- (0,0) -- (2,0);
  \end{scope}
}

\begin{figure}[t]
  \centering
  \mbox{}%
  \hfill%
  \subfloat[digital control system]{\label{fig:dcs}%
    \begin{tikzpicture}
      [scale=0.82,
      dot/.style={circle,fill=darkblue,outer sep=.0mm,inner sep=0mm,minimum size=1.5mm},
      txt/.style={font=\scriptsize},
      rst/.style={style=dashed,line width=1.0,color=darkred},
      trj/.style={line width=1.0,color=darkblue}]


      \coordinate (o) at (0,0);

      \coordinate (F0) at (1.0,1.0);
      \coordinate (F1) at (1.0,0.8);
      \coordinate (F2) at (0.7,1.0);
      \coordinate (F3) at (1.0,1.0);

      \coordinate (D0o) at (-.0,-.0);
      \coordinate (D1o) at (-.0,+.0);
      \coordinate (D2o) at (+.0,-.0);
      \coordinate (D3o) at (+.0,+.0);

      \node (D0) at ($(D0o) + .85*(-2.,-2.)$) {};
      \node (D1) at ($(D1o) + .85*(-2.,+2.)$) {};
      \node (D2) at ($(D2o) + .85*(+2.,-2.)$) {};
      \node (D3) at ($(D3o) + .85*(+2.,+2.)$) {};

      \node (x0) at ($(D0o) - 1*(F0)$) {};
      \node (x3) at ($(D3o) + 1.3*(F3)$) {};

      \node (x0+) at ($(D0o)$) {};
      \node (x3-) at ($(D3o)$) {};

      \node (y0+) at ($(D0o) + (-1.0,0.)$) {};
      \node (y0-) at ($(y0+) - 1*(F0)$) {};

      \node (y1-) at ($(D1o) + (-1.0,0.)$) {};
      \node (y1+) at ($(y1-) + 1.*(F1)$) {};

      \node (y3-) at ($(y1+) + (D3o) - (D1o)$) {};
      \node (y3+) at ($(y3-) + .5*(F3)$) {};

      \node (z0+) at ($(D0o) + (0.,-1.0)$) {};
      \node (z0-) at ($(z0+) - 1*(F0)$) {};

      \node (z2-) at ($(D2o) + (0.,-1.0)$) {};
      \node (z2+) at ($(z2-) + 1.*(F2)$) {};

      \node (z3-) at ($(z2+) + (D3o) - (D2o)$) {};
      \node (z3+) at ($(z3-) + .4*(F3)$) {};

      \node[txt] at (1.65cm,3mm) {$\xi_1$};
      \node[txt] at (-3mm,1.65cm) {$\xi_2$};

      \node[txt,color=_purp] at ($(x0) + (.5,0.)$) {$x(0)$};
      \node[txt,color=_purp] at ($(x3) + (.3,.3)$) {$x(T)$};

      \node[txt,color=_cyan] at ($(y0-) + .3*(0.7,-1.)$) {$y_\delta(0)$};
      \node[txt,color=_cyan] at ($(y3+) + (0.1,.3)$) {$y_\delta(T)$};

      \node[txt,color=_mage] at ($(z0-) + (0.5,0.)$) {$z_\delta(0)$};
      \node[txt,color=_mage] at ($(z3+) + (.2,.3)$) {$z_\delta(T)$};

      \node[txt,color=_cyan] at ($.6*(x0) + .4*(y0-) + (0,.2)$) {$\delta$};
      \path[draw,->,rst,color=_cyan] (x0) -- (y0-);

      \node[txt,color=_mage] at ($.6*(x0) + .4*(z0-) + (.2,.0)$) {$\delta$};
      \path[draw,->,rst,color=_mage] (x0) -- (z0-);

      \quadrant{(D0o)}{180}
      \quadrant{(D1o)}{90}
      \quadrant{(D2o)}{270}
      \quadrant{(D3o)}{0}

      \node[dot,color=_purp] at (D0o) {};
      \node[dot,color=_purp] at (D3o) {};

      \node[dot,color=_purp] at (x0) {};
      \node[dot,color=_purp] at (x3) {};

      \node[dot,color=_cyan] at (y0-) {};
      \node[dot,color=_cyan] at (y0+) {};

      \node[dot,color=_cyan] at (y1-) {};
      \node[dot,color=_cyan] at (y1+) {};

      \node[dot,color=_cyan] at (y3-) {};
      \node[dot,color=_cyan] at (y3+) {};

      \node[dot,color=_mage] at (z0-) {};
      \node[dot,color=_mage] at (z0+) {};

      \node[dot,color=_mage] at (z2-) {};
      \node[dot,color=_mage] at (z2+) {};

      \node[dot,color=_mage] at (z3-) {};
      \node[dot,color=_mage] at (z3+) {};

      \path[draw,trj,color=_purp] (x0) -- (D0o);
      \path[draw,trj,color=_purp] (D3o) -- (x3);
      \path[draw,->,trj,color=_purp] (x0) -- ($ (x0) + .4*(F0) $);
      \path[draw,->,trj,color=_purp] (D3o) -- ($ (D3o) + .4*(F3) $);

      \path[draw,trj,color=_cyan] (y0-) -- (y0+);
      \path[draw,trj,color=_cyan] (y1-) -- (y1+);
      \path[draw,trj,color=_cyan] (y3-) -- (y3+);
      \path[draw,->,trj,color=_cyan] (y0-) -- ($ (y0-) + .4*(F0) $);
      \path[draw,->,trj,color=_cyan] (y1-) -- ($ (y1-) + .4*(F1) $);
      \path[draw,->,trj,color=_cyan] (y3-) -- ($ (y3-) + .4*(F3) $);

      \path[draw,trj,color=_mage] (z0-) -- (z0+);
      \path[draw,trj,color=_mage] (z2-) -- (z2+);
      \path[draw,trj,color=_mage] (z3-) -- (z3+);
      \path[draw,->,trj,color=_mage] (z0-) -- ($ (z0-) + .4*(F0) $);
      \path[draw,->,trj,color=_mage] (z2-) -- ($ (z2-) + .4*(F2) $);
      \path[draw,->,trj,color=_mage] (z3-) -- ($ (z3-) + .3*(F3) $);
    \end{tikzpicture}
  }%
  \hfill%
  \subfloat[controlled hybrid system]{\label{fig:chs}%
    \begin{tikzpicture}
      [scale=0.7,
      dot/.style={circle,fill=darkblue,outer sep=0.0mm,inner sep=0mm,minimum size=1.5mm},
      txt/.style={font=\scriptsize},
      rst/.style={style=dashed,line width=1.0},
      trj/.style={line width=1.0,color=darkblue}]


      \coordinate (o) at (0,0);

      \coordinate (F0) at (1.0,1.0);
      \coordinate (F1) at (1.0,0.8);
      \coordinate (F2) at (0.7,1.0);
      \coordinate (F3) at (1.0,1.0);

      \coordinate (D0o) at (-.75,-.5);
      \coordinate (D1o) at (-.75,+.5);
      \coordinate (D2o) at (+.75,-.5);
      \coordinate (D3o) at (+.75,+.5);

      \node (D0) at ($(D0o) + .85*(-2.,-2.)$) {};
      \node (D1) at ($(D1o) + .55*(-2.,+2.)$) {};
      \node (D2) at ($(D2o) + .55*(+2.,-2.)$) {};
      \node (D3) at ($(D3o) + .85*(+2.,+2.)$) {};

      \node (x0) at ($(D0o) - 1*(F0)$) {};
      \node (x3) at ($(D3o) + 1.3*(F3)$) {};

      \node (x0+) at ($(D0o)$) {};
      \node (x3-) at ($(D3o)$) {};

      \node (y0+) at ($(D0o) + (-1.0,0.)$) {};
      \node (y0-) at ($(y0+) - 1*(F0)$) {};

      \node (y1-) at ($(D1o) + (-1.0,0.)$) {};
      \node (y1+) at ($(y1-) + 1.*(F1)$) {};

      \node (y3-) at ($(y1+) + (D3o) - (D1o)$) {};
      \node (y3+) at ($(y3-) + .5*(F3)$) {};

      \node (z0+) at ($(D0o) + (0.,-1.0)$) {};
      \node (z0-) at ($(z0+) - 1*(F0)$) {};

      \node (z2-) at ($(D2o) + (0.,-1.0)$) {};
      \node (z2+) at ($(z2-) + 1.*(F2)$) {};

      \node (z3-) at ($(z2+) + (D3o) - (D2o)$) {};
      \node (z3+) at ($(z3-) + .4*(F3)$) {};

      \node[txt] at (D0) {$D_0$};
      \node[txt] at (D1) {$D_1$};
      \node[txt] at (D2) {$D_2$};
      \node[txt] at (D3) {$D_3$};

      \quadrant{(D0o)}{180}
      \quadrant{(D1o)}{90}
      \quadrant{(D2o)}{270}
      \quadrant{(D3o)}{0}

      \node[dot,color=_purp] at (D0o) {};
      \node[dot,color=_purp] at (D3o) {};

      \node[dot,color=_purp] at (x0) {};
      \node[dot,color=_purp] at (x3) {};

      \node[dot,color=_cyan] at (y0-) {};
      \node[dot,color=_cyan] at (y0+) {};

      \node[dot,color=_cyan] at (y1-) {};
      \node[dot,color=_cyan] at (y1+) {};

      \node[dot,color=_cyan] at (y3-) {};
      \node[dot,color=_cyan] at (y3+) {};

      \node[dot,color=_mage] at (z0-) {};
      \node[dot,color=_mage] at (z0+) {};

      \node[dot,color=_mage] at (z2-) {};
      \node[dot,color=_mage] at (z2+) {};

      \node[dot,color=_mage] at (z3-) {};
      \node[dot,color=_mage] at (z3+) {};

      \path[draw,trj,color=_purp] (x0) -- (D0o);
      \path[draw,trj,color=_purp] (D3o) -- (x3);
      \path[draw,->,trj,color=_purp] (x0) -- ($ (x0) + .4*(F0) $);
      \path[draw,->,trj,color=_purp] (D3o) -- ($ (D3o) + .4*(F3) $);

      \path[draw,trj,color=_cyan] (y0-) -- (y0+);
      \path[draw,trj,color=_cyan] (y1-) -- (y1+);
      \path[draw,trj,color=_cyan] (y3-) -- (y3+);
      \path[draw,->,trj,color=_cyan] (y0-) -- ($ (y0-) + .4*(F0) $);
      \path[draw,->,trj,color=_cyan] (y1-) -- ($ (y1-) + .4*(F1) $);
      \path[draw,->,trj,color=_cyan] (y3-) -- ($ (y3-) + .4*(F3) $);

      \path[draw,trj,color=_mage] (z0-) -- (z0+);
      \path[draw,trj,color=_mage] (z2-) -- (z2+);
      \path[draw,trj,color=_mage] (z3-) -- (z3+);
      \path[draw,->,trj,color=_mage] (z0-) -- ($ (z0-) + .4*(F0) $);
      \path[draw,->,trj,color=_mage] (z2-) -- ($ (z2-) + .4*(F2) $);
      \path[draw,->,trj,color=_mage] (z3-) -- ($ (z3-) + .3*(F3) $);

      \path[draw,->,rst] (x0+) -- (x3-);

      \node[txt] at ($.5*(y0+) + .5*(y1-) + (-0.99,0)$) {$R_{(0,1)}$};
      \path[draw,->,rst] (y0+) .. controls ($.75*(y0+) + .25*(y1-) + (-.5,0)$)
      and ($.25*(y0+) + .75*(y1-) + (-.5,0)$) .. (y1-);

      \node[txt] at ($.5*(y1+) + .5*(y3-) + (0,+.7)$) {$R_{(1,3)}$};
      \path[draw,->,rst] (y1+) .. controls ($.75*(y1+) + .25*(y3-) + (0,+.5)$)
      and ($.25*(y1+) + .75*(y3-) + (0,+.5)$) .. (y3-);

      \node[txt] at ($.5*(z0+) + .5*(z2-) + (0,-.7)$) {$R_{(0,2)}$};
      \path[draw,->,rst] (z0+) .. controls ($.75*(z0+) + .25*(z2-) + (0,-.5)$)
      and ($.25*(z0+) + .75*(z2-) + (0,-.5)$) .. (z2-);

      \node[txt] at ($.5*(z2+) + .5*(z3-) + (1.,0)$) {$R_{(2,3)}$};
      \path[draw,->,rst] (z2+) .. controls ($.75*(z2+) + .25*(z3-) + (+.5,0)$)
      and ($.25*(z2+) + .75*(z3-) + (+.5,0)$) .. (z3-);
    \end{tikzpicture}
  }%
  \hfill%
  \mbox{}%
  \caption{%
    Illustration of digital control system governing the time evolution of two physical quantities $(\xi_1,\xi_2) \in \R^2$ (Fig.~\ref{fig:dcs}) and a controlled hybrid system induced by discrete transitions in the digital controller state (Fig.~\ref{fig:chs}).
    The execution $x$ undergoes two discrete transitions simultaneously; the nearby executions $y_\delta$, $z_\delta$ encounter different discrete transition sequences.
    Since $R_{(1,3)}\circ R_{(0,1)} (0,0) = R_{(2,3)}\circ R_{(0,2)} (0,0) = (0,0)$, either transition sequence may be chosen for $x$.
  }
  \label{fig:motivation}
\end{figure}
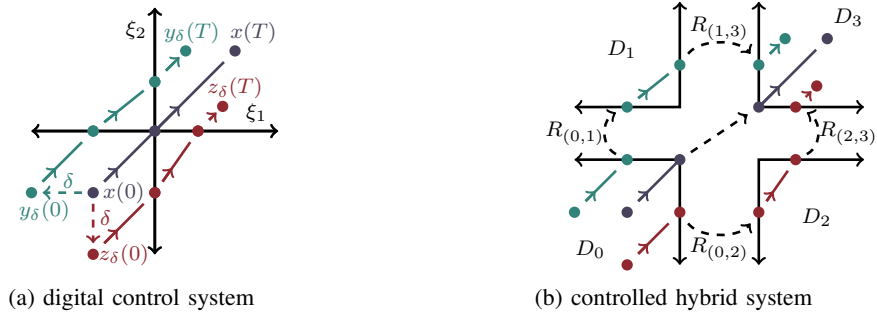

To overcome this limitation, we construct a distance metric over the state space of a controlled hybrid system and apply this metric to develop a provably--convergent numerical simulation algorithm applicable to the class of hybrid systems illustrated in Fig.~\ref{fig:chs}.
Our framework enables formal investigation of a wide range of systems:
the dynamics may be nonlinear, the continuous dynamics may be controlled, and multiple discrete transitions may occur simultaneously, so long as executions are orbitally stable.

Efforts to construct topologies on controlled hybrid systems have been significant, and can be best appreciated in this context by determining whether they induce a metric space.
Nerode and Kohn~\cite{Nerode1993} define state--space topologies that are not required to be metric spaces but are generated by finite--state automata associated with digital control systems.
Simic~\etal~\cite{Simic2005} apply a quotient construction to obtain, under regularity conditions, a topological manifold (or \emph{hybrifold}).
Ames and Sastry~\cite{Ames2005b} derive a category--theoretic \emph{colimit} topology over the \emph{regularization} proposed by Johansson~\etal~\cite{JohanssonEgerstedt1999} that relaxes domains at the guard sets.
We propose a metric topology over the state space of controlled hybrid systems that connects disparate domains through the reset map,
effectively metrizing the \emph{hybrifold} and \emph{colimit} topologies, and generalizing the \emph{phase space} metric proposed by Schatzman for an impact oscillator~\cite{Schatzman1998}.
In contrast, Tavernini~\cite{Tavernini1987} and Gobel and Teel~\cite{GoebelTeel2006} directly metrize the space of executions of hybrid systems; Gokhman~\cite{Gokhman2008} demonstrates the equivalence of the resulting topology with that generated by the Skorohod trajectory metric~\cite[Chapter~6]{Pollard1984}).
We highlight in more detail the limitations imposed by metrizing the trajectory space rather than state space in Section~\ref{sec:metric}.


The literature on numerical simulation of hybrid systems may be partitioned into two groups: practical algorithms focused on high--precision estimates of discrete event times, and theoretical proofs of convergence for simulations. 
Practical algorithms aim to place time--steps close to discrete event times using root--finding~\cite{Carver1978_,Shampine1991,GuckenheimerNerode1992}.
Theoretical proofs of convergence have generally required restrictive assumptions:
Esposito~\etal~\cite{EspositoKumar2001}, apply feedback linearization to asymptotically guarantee event detection for semi--algebraic guards, while Paoli and Schatzman~\cite{PaoliSchatzman2003ii} develop a provably--convergent algorithm for mechanical states undergoing impact. 
%
The most general convergence results relax the requirement that discrete transition times be determined accurately~\cite{Tavernini1987,Tavernini2009,SanfeliceTeel2010,BurdenGonzalezVasudevan2011}, and consequently can accommodate arbitrary nonlinear transition surfaces, Lipschitz continuous vector fields, and continuous discrete transition maps.
We extend this approach using our state--space metric to prove convergence, at least at a linear rate, to executions that satisfy an \emph{orbital stability} property described in Section~\ref{sec:simulation}.
Our algorithm is applicable to hybrid systems possessing control inputs and overlapping guards, representing a substantial contribution beyond our previous efforts~\cite{BurdenGonzalezVasudevan2011} and those of others~\cite{Tavernini1987, Tavernini2009, SanfeliceTeel2010}.


\paragraph*{Organization}
Section~\ref{sec:problem_formulation} contains definitions of mathematical concepts of interest.
Section~\ref{sec:relaxation} contains our technique for metrization and relaxation of controlled hybrid systems.
In Section~\ref{sec:simulation} we develop an algorithm for numerical simulation and prove uniform convergence at a linear rate of simulations to orbitally stable executions.
The technical and practical advantages of our techniques are illustrated in a series of examples in Section~\ref{sec:examples}.

\section{Preliminaries}
\label{sec:problem_formulation}

We begin with the definitions and assumptions used throughout the paper.

\subsection{Topology}

The $2$--norm is our finite--dimensional norm of choice unless otherwise specified.
Let $P_A$ be the set of all finite partitions of $A \subset \R$.
Given $n \in \N$, we define the \emph{total variation} of $f \in L^\infty(\R,\R^n)$ by:
\begin{equation}
  \label{eq:BV_norm}
  \totvar{f}
  \!=\! \sup \setbb{\sum_{j=0}^{m-1} \norm{f(t_{j+1}) \!-\! f(t_{j})}_1 \!\mid \!\set{t_k}_{k=0}^m \!\in P_{\R},
    m \in \N},
\end{equation}
where $L^\infty(\R,\R^n)$ is the set of all almost everywhere bounded functions from $\R$ to $\R^n$.
The total variation of $f$ is a semi--norm, i.e.\ it satisfies the Triangle Inequality, but does not separate points.
$f$ is of \emph{bounded variation} if $\totvar{f} < \infty$, and we define $BV(\R,\R^n)$ to be the set of all functions of bounded variation from $\R$ to $\R^n$.

Given $n \in \N$ and $D \subset \R^n$, $\bdry{D}$ is the boundary of $D$, and $\setint{D}$ is the interior of $D$.
Recall that given a collection of sets $\set{S_\alpha}_{\alpha \in {\cal A}}$, where ${\cal A}$ might be uncountable, the \emph{disjoint union} of this collection is $\coprod_{\alpha \in {\cal A}} S_\alpha = \bigcup_{\alpha \in {\cal A}} S_\alpha \times \set{\alpha}$, a set that is endowed with the piecewise--defined topology.
Throughout the paper we will abuse notation and say that given $\bar{\alpha} \in {\cal A}$ and $x \in S_{\bar{\alpha}}$, then $x \in \coprod_{\alpha \in {\cal A}} S_\alpha$, even though we should write $\iota_{\bar{\alpha}}(x) \in \coprod_{\alpha \in {\cal A}} S_\alpha$, where $\iota_{\bar{\alpha}}\colon S_{\bar{\alpha}} \to \coprod_{\alpha \in {\cal A}} S_\alpha$ is the \emph{canonical identification} $\iota_{\bar{\alpha}}(x) = (x,\bar{\alpha})$.

In this paper we make extensive use of the concept of a \emph{quotient topology} induced by an equivalence relation defined on a topological space.
We regard a detailed exposition of this important concept as outside the scope of this paper, and refer the reader to Chapter~3 in~\cite{Kelley1955} or Section~22 in~\cite{Munkres2000} for more details.
The next definition formalizes equivalence relations in topological spaces induced by functions.
If $f\colon A \to B$, $V \subset A$, and $V' \subset B$, then we let $f(V) = \set{f(a) \in B \mid a \in V}$ denote the image of $V$ under $f$, and $f^{-1}(V') = \set{a \in A \mid f(a) \in V'}$ denote the pre--image of $V$ under $f$.
\begin{definition}
  \label{def:equiv_relation}
  Let ${\cal S}$ be a topological space, $A, B \subset {\cal S}$ two subsets, and $f\colon A \to B$ a function.
  The \emph{$f$--induced equivalence relation}, denoted $\Lambda_f$, is the smallest equivalence relation containing the set $\setb{(a,b) \in {\cal S} \times {\cal S} \mid a \in f^{-1}(b)}$ (Section~4.2.4 in~\cite{Hein2009}).
  We say that \emph{$a, b \in {\cal S}$ are $f$--related}, denoted by $a \simfunc{f} b$, if $(a,b) \in \Lambda_f$.
  Moreover, the \emph{equivalence class of $x \in {\cal S}$} is defined as $\equivclass{x}{f} = \setB{a \in {\cal S} \mid a \simfunc{f} x}$, and the set of equivalence classes is defined as $\frac{\cal S}{\Lambda_f} = \setb{\equivclass{x}{f} \mid x\in {\cal S}}$.
  We endow the \emph{quotient} $\frac{\cal S}{\Lambda_f}$ with the quotient topology.
\end{definition}
\noindent Note that $\Lambda_f$ is reflexive, symmetric, and transitive, i.e.\ an equivalence relation.
An important application of the function--induced quotient is the construction of a single topological space out of several disconnected sets.
Indeed, given a collection of sets $\set{S_\alpha}_{\alpha \in {\cal A}}$, where ${\cal A}$ is some index set, and a function $f\colon U \to \coprod_{\alpha \in {\cal A}} S_\alpha$, where $U \subset \coprod_{\alpha \in {\cal A}} S_\alpha$, then $\widehat{S} = \frac{\coprod_{\alpha \in {\cal A}} S_\alpha}{\Lambda_f}$ is a topological space.

Next, we present a useful concept from graph theory that simplifies our ensuing analysis.
\begin{definition}
  \label{def:graph_nbhd}
  Let $({\cal J},\Gamma)$ be a directed graph, where ${\cal J}$ is the set of vertices and $\Gamma \subset {\cal J} \times {\cal J}$ is the set of edges.
  Then, given $j \in {\cal J}$, define the \emph{neighborhood of $j$}, denoted ${\cal N}_j$, by:
  \begin{equation}
    \label{eq:graph_nbhd}
    {\cal N}_j = \set{e \in \Gamma \mid \exists j' \in {\cal J}\ \text{such that}\ e = (j,j')}.
  \end{equation}
\end{definition}

\subsection{Length Metrics}

Every metric space has an induced length metric, defined by measuring the length of the shortest curve between two points.
Throughout this paper, we use induced length metrics to metrize the function--induced quotients of disjoint unions of sets.
To formalize this approach, we begin by defining the length of a curve in a metric space; the following definition is equivalent to Definition~2.3.1 in~\cite{Burago2001}.
\begin{definition}
  \label{def:curve_length}
  Let $(S,d)$ be a metric space, $I \subset [0,1]$ be an interval, and $\gamma\colon I \to {\cal S}$ be a continuous function.
  Define the \emph{length of $\gamma$ under the metric $d$} by:
  \begin{equation}
    \label{eq:curve_length}
    L_d(\gamma) = \sup\setbb{\sum_{i=0}^{k-1} d\bigl( \gamma(\bar{t}_i), \gamma(\bar{t}_{i+1}) \bigr)
      \mid k \in \N,\ \set{\bar{t}_i}_{i=0}^k \in P_I}.
  \end{equation}
\end{definition}

We now define a generalization of continuous curves for quotiented disjoint unions.
\begin{definition}
  \label{def:connected_curve}
  Let $\set{S_\alpha}_{\alpha \in {\cal A}}$ be a collection of sets and $f\colon U \to \coprod_{\alpha \in {\cal A}} S_\alpha$, where $U \subset \coprod_{\alpha \in {\cal A}} S_\alpha$.
  $\gamma\colon [0,1] \to \coprod_{\alpha \in {\cal A}} S_\alpha$ is \emph{$f$--connected} if there exists
  $k \in \N$ and 
  $\set{t_i}_{i=0}^k \subset [0,1]$ with $0 = t_0 \leq t_1 \leq \ldots \leq t_k = 1$ such that $\gamma|_{[t_i,t_{i+1})}$ is continuous for each $i \in \set{0,1,\ldots, k-2}$, $\gamma|_{[t_{k-1},t_{k}]}$ is continuous, and $\lim_{t \uparrow t_i} \gamma(t) \simfunc{f} \gamma(t_i)$ for each $i \in \set{0,1,\ldots, k-1}$.
  Moreover, in that case $\set{t_i}_{i=0}^k$ is called a \emph{partition of $\gamma$}.
\end{definition}
\noindent Note that, since each section $\gamma|_{[t_i,t_{i+1})}$ is continuous, it must necessarily belong to a single set $S_\alpha$ for some $\alpha \in {\cal A}$ because the disjoint union is endowed with the piecewise--defined topology.
In the case when ${\cal A} = \set{\alpha}$ is a singleton, then every $\id_{S_\alpha}$--connected curve is simply a continuous curve over $S_\alpha$, where $\id_{S_\alpha}$ denotes the identity function in $S_\alpha$.
Figure~\ref{fig:connected_curve} shows an example of a connected curve over a collection of two sets.

\begin{figure}[t]
  \centering
  \input{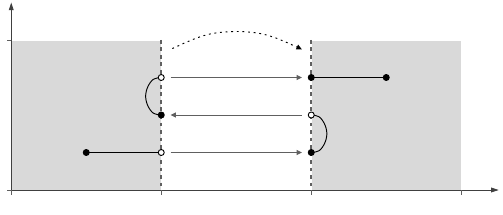tex_t}%
  \caption{$g$--connected curve $\gamma$ with partition $\set{t_i}_{i=0}^4$, where $S_\alpha = {[a,a+1]} \times {[0,1]}$, $S_{\bar{\alpha}} = {[b,b+1]} \times {[0,1]}$, and $g \colon \set{a+1} \times {[0,1]} \to \set{b} \times {[0,1]}$ with $g(a+1,x) = (b,x)$.}%
  \label{fig:connected_curve}%
\end{figure}

Using the concept of connected curves, we now define the induced length distance of a collection of metric spaces.
The induced length distance is a generalization of the induced metric defined in Chapter~2 in~\cite{Burago2001}.
\begin{definition}
  \label{def:induced_metric}
  Let $\set{(S_\alpha,d_\alpha)}_{\alpha \in {\cal A}}$ be a collection of metric spaces, and let $\set{X_\alpha}_{\alpha \in {\cal A}}$ be a collection of sets such that $X_\alpha \subset S_\alpha$ for each $\alpha \in {\cal A}$.
  Furthermore, let $f\colon U \to \coprod_{\alpha \in {\cal A}} X_\alpha$, where $U \subset \coprod_{\alpha \in {\cal A}} X_\alpha$, and let $\widehat{X} = \frac{\coprod_{\alpha \in {\cal A}} X_\alpha}{\Lambda_f}$.
 $\indmetric{\widehat{X}}\colon \widehat{X} \times \widehat{X} \to [0,\infty]$ is the \emph{$f$--induced length distance of $\widehat{X}$}, defined by:
  \begin{multline}
    \label{eq:induced_metric}
    \indmetric{\widehat{X}}(p,q)
    \!=\! \inf\setbb{\sum_{i=0}^{k-1} L_{d_{\alpha_i}}\parenb{\gamma|_{[t_i,t_{i+1})}} \mid{}
      \gamma\colon [0,1] \to \coprod_{\alpha \in {\cal A}} X_\alpha,
      \gamma(0) = p,\ \gamma(1) = q,\\
      \gamma\ \text{is $f$--connected},\ \set{t_i}_{i=0}^k \in P_{[0,1]},\
      \set{\alpha_i}_{i=0}^{k-1}\ \text{s.t.}\ \gamma\bigl([t_i,t_{i+1})\bigr) \subset X_{\alpha_i}\ \forall i}.
  \end{multline}
\end{definition}
\noindent
We invoke this definition to metrize both subsets and disjoint unions of metric spaces.
It is important to note that although $\indmetric{\widehat{X}}$ is non--negative, symmetric, and subadditive, it does not necessarily separate points of $\widehat{X}$ \see{Section~2.3 in~\cite{Burago2001}}, and hence generally only defines a \emph{pseudo}--metric.
In the special case where no function $f$ is supplied, then by convention we let $f = \id_X$, the identity function on $X$.
This implies $\widehat{X} = X$ and the induced metric coincides with the given metric.
The following Lemma is a straightforward consequence of Proposition~2.3.12 in~\cite{Burago2001}.
\begin{lemma}\label{lem:induced}
  Let $(S,d)$ be a metric space and $X \subset S$.
  Then $\indmetric{X}$ is a metric.
  Moreover, the topology on $X$ induced by $\indmetric{X}$ is equivalent to the topology on $X$ induced by $d$.
\end{lemma}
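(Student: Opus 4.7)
The plan is to verify the metric axioms for $\indmetric{X}$ directly from Definition~\ref{def:induced_metric}, then establish topological equivalence through two comparisons: using the inequality $\indmetric{X} \geq d$ in one direction, and appealing to the short-curve construction from Proposition~2.3.12 in~\cite{Burago2001} in the other.

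For the metric axioms, non-negativity is immediate since each $L_d(\gamma) \geq 0$; symmetry follows by reparametrizing admissible curves as $t \mapsto \gamma(1-t)$, which preserves length; and the triangle inequality follows by concatenating admissible curves from $p$ to $q$ and from $q$ to $r$ on the intervals $[0, 1/2]$ and $[1/2, 1]$ respectively, yielding a continuous curve from $p$ to $r$ whose length is the sum of the two. The key remaining step is separation of points: for any continuous $\gamma\colon [0,1] \to X$ with $\gamma(0) = p$ and $\gamma(1) = q$, the partition $\{0, 1\}$ witnesses the lower bound $L_d(\gamma) \geq d(p, q)$ in the supremum of Definition~\ref{def:curve_length}, so taking the infimum over admissible curves yields $\indmetric{X}(p, q) \geq d(p, q) > 0$ whenever $p \neq q$. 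This rules out the pseudo-metric pathology flagged in the paragraph preceding the lemma.

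The same inequality $\indmetric{X} \geq d$ shows that the identity map $(X, \indmetric{X}) \to (X, d)$ is $1$-Lipschitz, and hence continuous, so the $\indmetric{X}$-topology is at least as fine as the $d$-topology. The reverse direction is the substantive content: given $p \in X$ and $\epsilon > 0$, one must exhibit $\delta > 0$ such that every $q \in X$ with $d(p, q) < \delta$ admits a continuous curve in $X$ from $p$ to $q$ of $L_d$-length less than $\epsilon$. This is precisely the construction packaged inside Proposition~2.3.12 of~\cite{Burago2001}, which I plan to import directly. I expect this step to be the main obstacle, since the remainder of the argument amounts to bookkeeping once those short admissible curves are produced.
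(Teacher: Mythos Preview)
Your approach is essentially the same as the paper's: the paper offers no proof beyond declaring the lemma a straightforward consequence of Proposition~2.3.12 in~\cite{Burago2001}, and your proposal likewise defers the substantive reverse-inclusion step to that same proposition while additionally spelling out the metric axioms and the easy $\indmetric{X}\geq d$ direction that the paper leaves implicit.
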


\subsection{Controlled Hybrid Systems}
\label{sec:hybrid_systems}

Motivated by the definition of hybrid systems presented in~\cite{Simic2005}, we define the class of hybrid systems of interest in this paper.

\begin{definition}
  \label{def:hybrid_system}
  A \emph{controlled hybrid system} is a tuple
  \begin{equation}
    {\cal H} = ({\cal J}, \Gamma, {\cal D}, U, {\cal F}, {\cal G}, {\cal R}),
  \end{equation}
  where:
  \begin{newitemize}
  \item ${\cal J}$ is a finite set indexing the discrete states of ${\cal H}$;
  \item $\Gamma \subset {\cal J} \times {\cal J}$ is the set of edges, forming a directed graph structure over ${\cal J}$;
  \item ${\cal D} = \set{D_j}_{j \in {\cal J}}$ is the set of domains, where each $D_j$ is a subset of $\R^{n_j}$, $n_j \in \N$;
  \item $U\subset\R^m$ is the range space of control inputs, $m\in\N$;
  \item ${\cal F} = \set{f_j}_{j \in {\cal J}}$ is the set of vector fields, where each $f_j\colon \R \times D_j \times U \to \R^{n_j}$ is the vector field defining the dynamics of the system on $D_j$;
  \item ${\cal G} = \set{G_e}_{e \in \Gamma}$ is the set of guards, where each $G_{(j,j')} \subset \bdry{D_j}$ is a guard in mode $j \in {\cal J}$ that defines a transition to mode $j' \in {\cal J}$; and,
  \item ${\cal R} = \set{R_e}_{e \in \Gamma}$ is the set of reset maps, where each map $R_{(j,j')}\colon G_{(j,j')} \to D_{j'}$ defines the transition from guard $G_{(j,j')}$.
  \end{newitemize}
\end{definition}
\noindent For convenience, we sometimes refer to controlled hybrid systems as just hybrid systems, and we refer to the distinct vertices within the graph structure associated with a controlled hybrid system as modes.
Each domain in the definition of a controlled hybrid system is a metric space with the Euclidean distance metric.
A three--mode autonomous hybrid system, which is a particular case of Definition~\ref{def:hybrid_system} where none the vector fields $\set{f_j}_{j \in {\cal J}}$ depend on the control input, is illustrated in Fig.~\ref{fig:hybrid_system}.
Note that we restrict control inputs to the continuous flow, hence inputs do not have an effect during discrete transitions.

Next, we impose several technical assumptions that support existence and uniqueness of executions on hybrid domains.
We delay the definition of executions of a hybrid system to Section~\ref{subsec:hybrid_execution} once all the technical details regarding the metrization of spaces have been presented.

\begin{figure}
  \centering
  \input{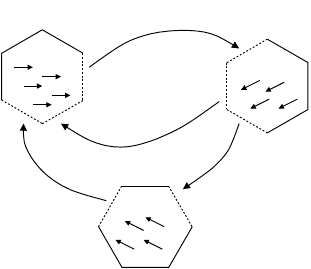tex_t}%
  \caption{Illustration of a controlled hybrid system with three modes.}
  \label{fig:hybrid_system}%
\end{figure}

\begin{assumption}
  \label{assump:topology}
  Let ${\cal H}$ be a controlled hybrid system.
  Then the following statements are true:
  \begin{enumparen}
  \item\label{assump:top_dom_manifolds} For each $j \in {\cal J}$, $D_j$ is a compact $n_j$--dimensional manifold with boundary.
  \item\label{assump:top_control} $U$ is compact.
  \item\label{assump:top_guards_submanifolds} For each $e \in \Gamma$, $G_e$ is a closed, embedded, codimension $1$ submanifold with boundary.
  \item\label{assump:top_reset_cont} For each $e \in \Gamma$, $R_e$ is continuous.
  \end{enumparen}
\end{assumption}

\begin{assumption}
  \label{assump:lipschitz_vf}
  For each $j \in {\cal J}$, $f_j$ is Lipschitz continuous.
  That is, there exists $L > 0$ such that for each $j \in {\cal J}$, $t_1, t_2 \in \R$, $x_1, x_2 \in D_j$, and $u_1, u_2 \in U$:
  \begin{equation}
    \label{eq:lipschitz_vf}
    \normb{f_j( t_1, x_1, u_1) - f_j( t_2, x_2, u_2)}
    \leq L \bigl( \abs{t_1 - t_2} + \norm{x_1 - x_2} + \norm{u_1 - u_2} \bigr).
  \end{equation}
\end{assumption}
\noindent Assumption~\ref{assump:lipschitz_vf} guarantees the existence and uniqueness of solutions to ordinary differential equations in individual domains.
In the sequel we will consider control inputs of \emph{bounded variation} $u \in BV(\R,U)$.
Note that without loss of generality we take $0$ as the initial time in the following Lemma; a general initial time can be accommodated by a straightforward change of variables.
\begin{lemma}
  \label{lemma:exist_and_unique}
  Let ${\cal H}$ be a controlled hybrid system.
  Then for each $j \in {\cal J}$, each initial condition $p \in D_j$, and each control $u \in BV(\R,U)$, there exists an interval $I \subset \R$ with $0 \in I$
  such that the following differential equation has a unique solution:
  \begin{equation}
    \label{eq:ode}
    \dot{x}(t) = f_j\bigl( t, x(t), u(t) \bigr),\quad t \in I,\quad x(0) = p.
  \end{equation}
  $x$ is called the \emph{integral curve} of $f_j$ with initial condition $p$ and control $u$.
  Moreover, $x|_I$ is absolutely continuous.
\end{lemma}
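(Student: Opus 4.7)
The plan is to reduce this to a Carathéodory-type existence and uniqueness theorem for ODEs with measurable (rather than continuous) time dependence. The only real subtlety is that the control $u \in BV(\R,U)$ need not be continuous, so the composite right-hand side $F(t,x) := f_j(t, x, u(t))$ is merely measurable in $t$, ruling out a naive Picard-Lindel\"of application. I would begin by verifying that $F \colon \R \times D_j \to \R^{n_j}$ satisfies the three standard Carath\'eodory hypotheses: (i) Lipschitz continuity in $x$ uniformly in $t$, (ii) measurability in $t$ for each fixed $x$, and (iii) a uniform bound on compact time intervals.

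Condition (i) is immediate from Assumption~\ref{assump:lipschitz_vf}: $\norm{F(t,x_1) - F(t,x_2)} \leq L \norm{x_1 - x_2}$ with no dependence on $t$ or $u$. Condition (iii) follows because $D_j$ and $U$ are compact by Assumption~\ref{assump:topology}, so the Lipschitz (hence continuous) map $f_j$ is bounded on $[-T,T] \times D_j \times U$ for any $T$. For condition (ii), I would observe that a function of bounded variation on $\R$ is the difference of two monotone functions, hence Borel measurable with at most countably many discontinuities; composition with the Lipschitz map $f_j(t,x,\cdot)$ preserves measurability, so $t \mapsto F(t,x)$ is measurable for each fixed $x$.

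With the Carath\'eodory hypotheses in place, I would invoke the classical local existence theorem (e.g.\ Coddington--Levinson, Ch.~2) to obtain a non-degenerate interval $I \ni 0$ and an absolutely continuous $x\colon I \to D_j$ satisfying the integral equation $x(t) = p + \int_0^t F(s,x(s))\,\diff{s}$; here $I$ is shrunk if necessary so that the trajectory remains in $D_j$, which is possible by continuity of $x$ and compactness of $D_j$ (taking a one-sided interval if $p \in \bdry{D_j}$ and $f_j$ points outward). Uniqueness on $I$ follows from a standard Gr\"onwall argument: if $x_1,x_2$ are two solutions, then $\norm{x_1(t) - x_2(t)} \leq L \int_0^{\abs{t}} \norm{x_1(s) - x_2(s)}\,\diff{s}$, which forces $x_1 \equiv x_2$. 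Absolute continuity of $x$ on $I$ is then automatic from the integral representation together with the uniform bound on $F(\cdot, x(\cdot))$ provided by (iii).

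The main obstacle, and the only place where the hybrid/controlled structure enters nontrivially, is establishing (ii): one must know that $BV$ controls are measurable and that this measurability survives composition with a function Lipschitz in its third argument. All remaining steps are essentially textbook once $F$ has been cast as a Carath\'eodory vector field.
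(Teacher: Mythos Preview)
Your approach is correct and follows essentially the same strategy as the paper: reduce to a Carath\'eodory--type existence and uniqueness result by observing that $BV$ controls are measurable and that the resulting time--dependent right--hand side satisfies the standard hypotheses. The paper's proof differs in one technical maneuver: rather than applying Carath\'eodory directly on the compact domain $D_j$, it first invokes McShane's extension theorem to obtain a globally Lipschitz $\widetilde{f}_j\colon \R \times \R^{n_j} \times U \to \R^{n_j}$, applies the existence result (citing Polak's Proposition~5.6.5) on all of $\R^{n_j}$, and then defines $I$ as the connected component of $\widetilde{x}^{-1}(D_j)$ containing $0$. This extension step cleanly sidesteps the boundary issue you allude to parenthetically: the classical Carath\'eodory theorem (Coddington--Levinson) is stated for vector fields defined on open subsets of $\R^{n_j}$, so your direct application on the compact $D_j$ is not quite licensed when $p\in\bdry{D_j}$ without either this extension or an explicit manifold--with--boundary variant. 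The extension also makes your ``shrink $I$ if necessary'' step precise and shows it is independent of the particular extension chosen. Otherwise the two arguments coincide.
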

\begin{proof}
  Let $\widetilde{f}_j\colon \R \times \R^{n_j} \times U \to \R^{n_j}$ be any globally Lipschitz continuous extension to $f_j$ (guaranteed to exist by Theorem~1 in~\cite{McShane1934}).
  Given any $p \in D_j \subset \R^{n_j}$ and $u \in BV(\R,U)$, Proposition~5.6.5 in~\cite{Polak1997} guarantees the existence of an integral curve $\widetilde{x}\colon \widetilde{I} \to \R^{n_j}$ for $\widetilde{f}_j$ with initial condition $\widetilde{x}(0) = p$.
  Note that $\widetilde{x}$ is absolutely continuous by Theorem~3.35 in~\cite{Folland1999}.
  Let $I \subset \widetilde{I}$ be the connected component of $\widetilde{x}^{-1}(D_j)$ containing $0$.
  Then $x = \widetilde{x}|_I$ is an absolutely continuous integral curve of $f_j$ and $x(I) \subset D_j$.
  Note that $x$ is unaffected by the choice of extension $\widetilde{f}_j$.
\end{proof}

The following definition is used to construct executions of a controlled hybrid system.
\begin{definition}
  \label{def:maximal_curve}
  Let ${\cal H}$ be a controlled hybrid system, $j \in {\cal J}$, $p \in D_j$, and $u \in BV(\R,U)$.
  $x\colon I \to D_j$ is the \emph{maximal integral curve} of $f_j$ with initial condition $p$ and control $u$ if, given any other integral curve with initial condition $p$ and control $u$, such as $\tilde{x}\colon \widetilde{I} \to D_j$, then $\widetilde{I} \subset I$.
\end{definition}

\noindent Given a maximal integral curve $x\colon I \to D_j$, a direct consequence\footnote{This follows from continuity of integral curves and compactness of hybrid domains.} of Definition~\ref{def:maximal_curve} and Assumption~\ref{assump:topology} is that either $\sup I = +\infty$, or $\sup I = t' < \infty$ and $x(t') \in \bdry{D_j}$.
This fact is critical during the definition of executions of a controlled hybrid systems in Section~\ref{sec:simulation}.

\section{Metrization and Relaxation of Controlled Hybrid Systems}
\label{sec:relaxation}

In this section, we metrize a unified family of spaces containing all the domains of a controlled hybrid system ${\cal H}$.
The constructed metric space has three appealing properties:
first, the distance between a point in a guard and its image via its respective reset map is zero;
second, the distance between points in different domains are properly defined and finite;
and third, the distance between points is based on the Euclidean distance metric from each domain.

\subsection{Hybrid Quotient Space}

Using Definitions~\ref{def:induced_metric} and~\ref{def:hybrid_system}, we construct a metric space where the executions of a controlled hybrid system reside.
The result is a metrization of the \emph{hybrifold}~\cite{Simic2005}.
\begin{definition}
  \label{def:hybrid_quotient_space}
  Let ${\cal H}$ be a controlled hybrid system, and let
  \begin{equation}
    \label{eq:hatR}
    \widehat{R}\colon \coprod_{e \in \Gamma} G_e \to \coprod_{j \in {\cal J}} D_j
  \end{equation}
  be defined by $\widehat{R}(p) = R_e(p)$ for each $p \in G_e$. Then the \emph{hybrid quotient space} of $\cal H$ is:
  \begin{equation}
    \label{eq:hybrid_quotient_space}
    {\cal M} = \frac{\coprod_{j \in {\cal J}} D_j}{\Lambda_{\widehat{R}}}.
  \end{equation}
\end{definition}
\noindent Fig.~\ref{fig:hybrid_quotient_space} illustrates the construction in Definition~\ref{def:hybrid_quotient_space}.
\begin{figure}[t]
  \centering
  \input{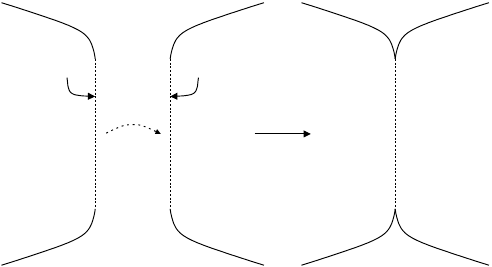tex_t}%
  \caption{The disjoint union of $D_1$ and $D_2$ (left) and the hybrid quotient space ${\cal M}$ obtained from the relation $\Lambda_{\widehat{R}}$ (right).}
  \label{fig:hybrid_quotient_space}
\end{figure}
The induced length distance on ${\cal M}$ is in fact a distance metric:
\begin{theorem}
  \label{thm:metric_M}
  Let ${\cal H}$ be a controlled hybrid system, and let $\indmetric{\cal M}$ be the $\widehat{R}$--induced length distance of ${\cal M}$, where $\widehat{R}$ is defined in~\eqref{eq:hatR}.
Then $\indmetric{\cal M}$ is a metric on ${\cal M}$, and the topology it induces is equivalent to the $\widehat{R}$--induced quotient topology.
\end{theorem}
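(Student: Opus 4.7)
My plan is to verify the metric axioms for $\indmetric{\cal M}$ in turn and then address the topological equivalence separately.

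First, the three elementary axioms follow directly from Definition~\ref{def:induced_metric}. Non-negativity is immediate; symmetry follows by reversing an $\widehat{R}$-connected curve (with the partition order reversed); the triangle inequality follows by concatenating two $\widehat{R}$-connected curves and observing that the resulting partition still satisfies the definition. Reflexivity $\indmetric{\cal M}(p,p)=0$ comes from the constant curve. Finiteness follows from compactness of each $D_j$ under Assumption~\ref{assump:topology}(\ref{assump:top_dom_manifolds}) and finiteness of $({\cal J},\Gamma)$, which together bound the length of any connecting curve within a connected component of the graph.

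The crux is the identity of indiscernibles: if $\indmetric{\cal M}(p,q)=0$, then $[p]_{\widehat R}=[q]_{\widehat R}$. My plan is to argue by contradiction. Suppose $\indmetric{\cal M}(p,q)=0$ and select $\widehat R$-connected curves $\gamma_n$ with $L(\gamma_n)\to 0$, associated partitions $\set{t_i^n}_{i=0}^{k_n}$ and mode sequences $\set{\alpha_i^n}_{i=0}^{k_n-1}$. The main obstacle is that $k_n$ need not be uniformly bounded, since pieces may be instantaneous in the sense that $t_i^n = t_{i+1}^n$ is allowed. I would handle this by establishing a reduction lemma: any $\widehat R$-connected curve admits an equivalent curve with partition size bounded by a constant depending only on $\abs{\cal J}$ and $\abs{\Gamma}$, obtained by collapsing consecutive jumps whose composition is $\widehat R$-related to the identity and merging same-mode pieces. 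Once $k_n$ is uniformly bounded, I pass to a subsequence with fixed $k_n = k$ and fixed mode sequence $\set{\alpha_i}_{i=0}^{k-1}$. Since each piece has length tending to zero and each $D_{\alpha_i}$ is compact, both endpoints of the $i$-th piece, $\gamma_n(t_i^n)$ and $\lim_{t\uparrow t_{i+1}^n}\gamma_n(t)$, admit subsequential limits that coincide at some $p_i \in D_{\alpha_i}$. The relation $\lim_{t\uparrow t_{i+1}^n}\gamma_n(t)\simfunc{\widehat{R}}\gamma_n(t_{i+1}^n)$ passes to the limit by continuity of $\widehat R$ (from Assumption~\ref{assump:topology}(\ref{assump:top_reset_cont})) and closedness of the guards (Assumption~\ref{assump:topology}(\ref{assump:top_guards_submanifolds})), yielding the chain $p=p_0\simfunc{\widehat R}p_1\simfunc{\widehat R}\cdots\simfunc{\widehat R}p_{k-1}=q$, hence $[p]_{\widehat R}=[q]_{\widehat R}$, the desired contradiction.

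Finally, for the topological equivalence, I would prove both inclusions. To show $\tau_m\subseteq \tau_q$, note that the canonical projection $\pi\colon\coprod_j D_j\to({\cal M},\indmetric{\cal M})$ is continuous because within any single domain the straight-line segment is $\widehat R$-connected, giving $\indmetric{\cal M}(\pi(x),\pi(y))\le\norm{x-y}$ for $x,y\in D_j$; hence every $\tau_m$-open set has open preimage under $\pi$ and is $\tau_q$-open. For the reverse inclusion $\tau_q\subseteq\tau_m$, given any $p\in V$ with $\pi^{-1}(V)$ open, I would use the fact that each representative in $[p]_{\widehat R}$ admits a Euclidean neighborhood inside $\pi^{-1}(V)$, and then invoke Lemma~\ref{lem:induced} applied domain-by-domain together with the separation result from the preceding step to find a uniform $r>0$ such that any point within $\indmetric{\cal M}$-distance $r$ of $p$ must, along a near-minimizing curve, remain in the union of these Euclidean neighborhoods and therefore lie in $V$. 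The hard part here is ensuring that a small $\indmetric{\cal M}$-distance cannot correspond to a curve that ``escapes'' to an unrelated representative; this is where the argument leans on the already-established separation and on equivalence classes being closed in $\coprod_j D_j$.
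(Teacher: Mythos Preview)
The reduction lemma is where your argument breaks. You assert that any $\widehat R$-connected curve can be replaced by one whose partition size is bounded uniformly in terms of $\abs{\cal J}$ and $\abs{\Gamma}$, via ``collapsing consecutive jumps whose composition is $\widehat R$-related to the identity.'' But Definition~\ref{def:connected_curve} requires each jump to satisfy the \emph{generating} relation $\simfunc{\widehat R}$, not its transitive closure, so a chain $a_0\simfunc{\widehat R}a_1\simfunc{\widehat R}\cdots\simfunc{\widehat R}a_k$ cannot in general be shortened unless non-adjacent $a_i$ happen to be directly related. Concretely, with two modes sharing guards $\{0\}\times[0,1]$ and reset maps $(0,y)\mapsto(0,y/2)$, the zero-length $\widehat R$-connected curve from $(0,1)$ to $(0,2^{-k})$ requires exactly $k$ jumps, and no pair of non-adjacent points in the chain is directly $\widehat R$-related. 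Your collapsing mechanism does not apply here, $k_n$ is unbounded, and the subsequence extraction with fixed mode sequence cannot proceed.

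The paper avoids limits of curves altogether and argues statically. It notes that reset maps are closed maps (guards are compact by Assumption~\ref{assump:topology}, so this is the Closed Map Lemma), hence each equivalence class is a finite union of closed sets in $\coprod_j D_j$; normality of the domains then yields disjoint open neighborhoods of $\equivclass{p}{\widehat R}$ and $\equivclass{q}{\widehat R}$, from which a uniform positive lower bound $\delta$ on the length of any connecting curve follows directly---no control on the number of jumps is needed, only that every curve must cross the gap between the two neighborhoods at least once. For the topological equivalence the paper likewise does not verify both inclusions by hand: it invokes (an extension of) the standard fact that a continuous bijection from a compact space to a Hausdorff space is a homeomorphism (Theorem~5.8 in Kelley), applied to the identity between the quotient-topology $\cal M$ and the metric-topology $\cal M$. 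Your $\tau_q\subseteq\tau_m$ argument, as you yourself observe, ultimately leans on equivalence classes being closed and on the separation already established, so adopting the paper's static viewpoint would streamline that half of the proof as well.
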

\begin{proof}
  We provide the main arguments of the proof, omitting the details in the interest of brevity.
  First, note that each domain is a normal space, i.e.\ every pair of disjoint closed sets have disjoint neighborhoods.
  Second, note that each reset map is a closed map, i.e.\ the image of closed sets under the reset map are closed.
  This fact follows by Condition~\ref{assump:top_guards_submanifolds} in Assumption~\ref{assump:topology}, since each guard is compact, thus reset maps are closed by the Closed Map Lemma (Lemma~A.19 in~\cite{Lee2003}).

  Let $\widehat{D} = \coprod_{j \in {\cal J}} D_j$ and $p,q \in \widehat{D}$.
  We aim to show that if $p$ and $q$ yield distinct equivalence classes (i.e. $(p,q)\notin\Lambda_{\hat{R}}$) then the induced distance between them is strictly positive.
  Note that the equivalence classes $\equivclass{p}{\widehat{R}}$ and $\equivclass{q}{\widehat{R}}$ are each a finite collection of closed sets.
  Moreover, since we can construct disjoint neighborhoods around each of these closed sets, then we can conclude that there exists $\delta > 0$ such that $\indmetric{\cal M}\parenb{\equivclass{p}{\widehat{R}}, \equivclass{q}{\widehat{R}}} > \delta$.
  The proof concludes by following the argument in Exercise~3.1.14 in~\cite{Burago2001}, i.e.\ since each connected component in $\widehat{D}$ is bounded, then ${\cal M}$ is also bounded (in the quotient topology).
  Then, using a simple extension of Theorem~5.8 in~\cite{Kelley1955}\footnote{The extension aims to allow the domain of the map to be bounded instead of compact. The new proof follows step--by--step the argument in~\cite{Kelley1955}.}, we get that the identity map from ${\cal M}$ to the space constructed by taking the quotient of all the points in $\widehat{D}$ such that $\indmetric{\cal M}$ has zero distance is a homeomorphism, thus they have the same topology.
\end{proof}
\noindent
It is crucial to note that all $\widehat{R}$--connected curves are continuous in the topology induced by the metric $\indmetric{\cal M}$ on the hybrid quotient space ${\cal M}$.
This implies in particular that executions of controlled hybrid systems (to be defined in Section~\ref{sec:simulation}) are continuous in ${\cal M}$ since the endpoint of the segment of an execution that lies in a guard $G_e$ will be $\widehat{R}$--related to the startpoint of the subsequent segment of the execution; alternately, this follows from Theorem~3.12(b) in~\cite{Simic2005} since ${\cal M}$ is equivalent to the ``hybrifold'' construction in that paper.
This important property is foundational to the convergence results for sequences of (relaxed) executions and their simulations derived in Section~\ref{sec:simulation}.
For further details, we refer the interested reader to Examples~3.2 and~3.3 in~\cite{Simic2005} where continuity is clearly discussed for simple examples.

\subsection{Relaxation of a Controlled Hybrid System}

To construct a numerical simulation scheme that does not require the exact computation of the time instant when an execution intersects a guard, we require a method capable of introducing some slackness within the computation. This is accomplished by relaxing\footnote{This should not be confused with the ``relaxation'' of hybrid inclusions described by Cai~\etal~\cite{CaiGoebel2008rx}.
Since interpreting our controlled hybrid systems as hybrid inclusions yields singleton--valued ``flow'' and ``jump'' maps, relaxation in this sense does not yield a distinct hybrid system.} each domain along its guard and then relaxing each vector field and reset map accordingly in order to define a relaxation of a controlled hybrid system.

To formalize this approach, we begin by defining the relaxation of each domain of a controlled hybrid system, which is accomplished by first attaching an $\rxsymbol$--sized strip to each guard.
\begin{definition}
  \label{def:relaxed_domain}
  Let ${\cal H}$ be a controlled hybrid system.
  For each $e \in \Gamma$, let $\rx{S_e} = G_e \times [0,\rxsymbol]$ be the \emph{strip associated to guard $G_e$}.
  For each $j \in {\cal J}$, let
  \begin{equation}
    \label{eq:iota_j}
    \chi_j\colon \coprod_{e \in {\cal N}_j} G_e \to \coprod_{e \in {\cal N}_j} \rx{S_e},
  \end{equation}
  be the \emph{canonical identification} of each point in a guard with its corresponding strip defined for each $p \in G_e$ as $\chi_j(p) = (p,0) \in \rx{S_e}$. Then, the \emph{relaxation of $D_j$} is defined by:
  \begin{equation}
    \label{eq:relaxed_domain}
    \rx{D_j} =
    \frac{D_j \coprod \paren{\coprod_{e \in {\cal N}_j} \rx{S_e}}}{\Lambda_{\chi_j}}.
  \end{equation}
\end{definition}
\noindent By Condition~\ref{assump:top_guards_submanifolds} in Assumption~\ref{assump:topology}, each point on a strip $\rx{S_e}$ of $D_j$ is defined using $n_j$ coordinates $(\zeta_1,\ldots,\zeta_{n_j-1},\tau)$, shortened $(\zeta,\tau)$, where $\tau$ is called the \emph{transverse coordinate} and is the distance along the interval $[0,\rxsymbol]$.
An illustration of Definition~\ref{def:relaxed_domain} together with the coordinates on each strip is shown in Fig.~\ref{fig:relaxed_domain}.

\begin{figure}[t]
  \centering
  \input{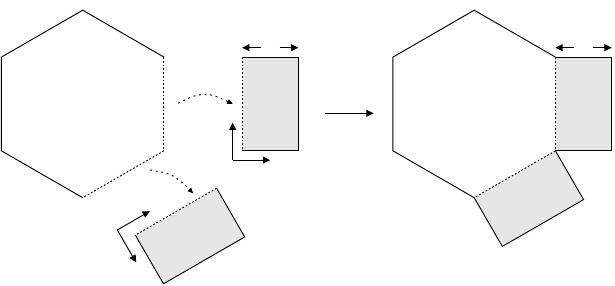tex_t}%
  \caption{Disjoint union of $D_1$ and the strips in its neighborhood, $\set{\rx{S_e}}_{e \in {\cal N}_1}$ (left), and the relaxed domain $\rx{D_1}$ obtained from the relation $\Lambda_{\chi_1}$ (right).}%
  \label{fig:relaxed_domain}%
\end{figure}

We endow each $\rx{S_e}$ with a distance metric in order to define an induced length metric on a relaxed domain $\rx{D_j}$.
\begin{definition}
  \label{def:strip_metric}
  Let $j \in {\cal J}$ and $e \in {\cal N}_j$.
  Endow $D_j$ with $\indmetric{D_j}$ as its metric, and $d_{\rx{S_e}}\colon \rx{S_e} \times \rx{S_e} \to [0,\infty)$ as the metric on $\rx{S_e}$, defined for each $\zeta,\zeta'\in G_e$ and $\tau,\tau'\in[0,\varepsilon]$ by:
  \begin{equation}
    \label{eq:strip_metric}
    d_{\rx{S_e}}\parenb{(\zeta,\tau), (\zeta',\tau')} = \indmetric{G_e}( \zeta, \zeta' ) + \abs{\tau - \tau'}.
  \end{equation}
\end{definition}

We now define a length metric on relaxed domains using Definitions~\ref{def:connected_curve} and~\ref{def:induced_metric}.
\begin{theorem}
  \label{thm:relaxed_metric_Dj}
  Let $j \in {\cal J}$, and let $\indmetric{\rx{D_j}}$ be the $\chi_j$--induced length distance on $\rx{D_j}$, where $\chi_j$ is as defined in~\eqref{eq:iota_j}.
  Then $\indmetric{\rx{D_j}}$ is a metric on $\rx{D_j}$, and the topology it induces is equivalent to the $\chi_j$--induced quotient topology.
\end{theorem}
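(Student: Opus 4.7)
The proof follows the structural template laid out in the proof of Theorem~\ref{thm:metric_M}, since the construction of $\rx{D_j}$ from $D_j$ and its neighboring strips via $\Lambda_{\chi_j}$ parallels the construction of $\mathcal{M}$ from the domains via $\Lambda_{\widehat{R}}$. Non--negativity, symmetry, and subadditivity of $\indmetric{\rx{D_j}}$ are inherited from Definition~\ref{def:induced_metric}, so the substantive content is (i) separation of points, and (ii) equivalence of the induced topology with the quotient topology.

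My plan is first to set up the ingredients necessary for the separation argument. The domain $D_j$ is a compact manifold with boundary by Condition~\ref{assump:top_dom_manifolds} in Assumption~\ref{assump:topology}, hence a normal topological space. Each guard $G_e$ for $e \in \mathcal{N}_j$ is closed, embedded, and (being a closed submanifold of the compact $D_j$) compact by Condition~\ref{assump:top_guards_submanifolds}, so each strip $\rx{S_e} = G_e \times [0,\rxsymbol]$ is compact and normal when endowed with the metric $d_{\rx{S_e}}$ from Definition~\ref{def:strip_metric}. Consequently the disjoint union $D_j \coprod \paren{\coprod_{e \in \mathcal{N}_j} \rx{S_e}}$ is a finite disjoint union of compact normal spaces, hence normal and bounded. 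Moreover, $\chi_j$ has compact domain $\coprod_{e \in \mathcal{N}_j} G_e$, so by the Closed Map Lemma (Lemma~A.19 in~\cite{Lee2003}) $\chi_j$ is a closed map, and every equivalence class $\equivclass{p}{\chi_j}$ consists of at most two points (a point $p \in G_e \subset \bdry{D_j}$ and the corresponding $(p,0) \in \rx{S_e}$), which is a finite collection of closed sets.

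Next I would carry out the separation argument. Given $p,q \in D_j \coprod \paren{\coprod_{e \in \mathcal{N}_j} \rx{S_e}}$ with $(p,q) \notin \Lambda_{\chi_j}$, the two finite closed collections $\equivclass{p}{\chi_j}$ and $\equivclass{q}{\chi_j}$ admit disjoint open neighborhoods by normality. Since each connected component of the disjoint union is bounded under its metric, I can apply the argument of Exercise~3.1.14 in~\cite{Burago2001} to obtain $\delta > 0$ with $\indmetric{\rx{D_j}}\parenb{\equivclass{p}{\chi_j}, \equivclass{q}{\chi_j}} > \delta$; concretely, any $\chi_j$--connected curve from $p$ to $q$ must exit a small tube around $\equivclass{p}{\chi_j}$, and the minimum length required to do so in the Euclidean metric on $D_j$ or the product metric on any $\rx{S_e}$ is bounded below by a positive constant. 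This establishes that $\indmetric{\rx{D_j}}$ separates distinct equivalence classes and is therefore a metric.

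Finally, the topology--equivalence statement follows from the extension of Theorem~5.8 in~\cite{Kelley1955} used in the proof of Theorem~\ref{thm:metric_M}: the identity map from $\rx{D_j}$ (with the quotient topology) to $\rx{D_j}$ (with the topology induced by $\indmetric{\rx{D_j}}$) is continuous, and the bounded--component argument upgrades it to a homeomorphism. I expect the main obstacle to be the separation step, because the induced length distance is only a pseudo--metric in general; what makes it a genuine metric here is the combination of compactness of the guards (giving closedness of $\chi_j$) and the product structure on the strips, which together prevent any sequence of $\chi_j$--connected curves from accumulating zero total length between inequivalent points.
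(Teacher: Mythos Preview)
Your proposal is correct, and in fact the paper explicitly notes (immediately after its own proof) that the theorem ``can be proved using the same argument as in the proof of Theorem~\ref{thm:metric_M}''---which is exactly what you do. However, the paper deliberately chooses a different, more concrete route: it establishes separation by proving the pointwise inequality $d_{\rx{S_e}}\bigl((p,0),(q,0)\bigr) \geq \indmetric{D_j}(p,q)$ for $p,q \in G_e$, which shows that detouring through a strip can never shorten a connected curve between points of $D_j$. From this it follows directly that $\indmetric{\rx{D_j}}$ restricted to $D_j$ agrees with $\indmetric{D_j}$ and hence separates points there; the remaining case (one point in the open part $G_e \times (0,\rxsymbol]$ of a strip) is handled by an elementary ball argument. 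The paper's approach is less abstract than yours and is chosen specifically because the strip inequality is reused in the proof of Theorem~\ref{thm:metric_convergence}. Your template argument, by contrast, is more uniform and requires no new ingredient beyond what was already assembled for Theorem~\ref{thm:metric_M}; it buys economy of thought at the cost of not isolating a lemma the paper wants later.

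One minor inaccuracy: equivalence classes under $\chi_j$ need not consist of ``at most two points''---a point lying in the intersection of several guards $G_e$ is identified with $(p,0)$ in each corresponding strip. This does not affect your argument, since the class is still a finite union of closed sets.
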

\begin{proof}
  Since $\indmetric{\rx{D_j}}$ is non--negative, symmetric, and subadditive, it remains to show that it separates points.
  Let $p, q \in \rx{D_j}$.
  First, we want to show that $\equivclass{p}{\chi_j} = \equivclass{q}{\chi_j}$ whenever $\indmetric{\rx{D_j}}(p,q) = 0$.
  Note that for each $e \in {\cal N}_j$ and each pair $p,q \in G_e$, and by the Definition~\ref{def:induced_metric} and~\ref{def:strip_metric}, $d_\rx{S_e}( (p,0),(q,0)) \geq \indmetric{D_j}(p,q)$, hence no connected curve that transitions to a strip can be shorter than a curve that stays in $D_j$.
  This fact immediately shows that for $p,q \in D_j$, $\indmetric{\rx{D_j}}(p,q) = 0$ implies that $\equivclass{p}{\chi_j} = \equivclass{q}{\chi_j}$.
  The case when one of the points is in $G_e \times (0,\rxsymbol] \subset \rx{S_e}$ follows easily by noting that those points can be separated by a suitably--sized $d_\rx{S_e}$--ball.
  The proof concludes by following the argument in Exercise~3.1.14 in~\cite{Burago2001}, as we did in the proof of Theorem~\ref{thm:metric_M}.
\end{proof}
\noindent Refer to $\indmetric{\rx{D_j}}$ as the \emph{relaxed domain metric}.
Note that Theorem~\ref{thm:relaxed_metric_Dj} can be proved using essentially the same argument as in the proof of Theorem~\ref{thm:metric_M}, but we prove Theorem~\ref{thm:relaxed_metric_Dj} to emphasize the utility of the inequality relating the induced metric on a domain and the metric on each strip.

Next, we define a vector field over each relaxed domain.
\begin{definition}
  \label{def:relaxed_vf}
  Let $j \in {\cal J}$.
  For each $e \in {\cal N}_j$, let the vector field on the strip $\rx{S_e}$, denoted $f_e$, be the unit vector pointing outward along the transverse coordinate. In coordinates, $f_e\bigl( t, (\zeta,\tau), u \bigr) = \parenb{\underbrace{0,\ \dots,\ 0}_{\zeta\ \text{coords.}},\ 1}^T$.
  Then, the \emph{relaxation of $f_j$} is:
  \begin{equation}
    \label{eq:relaxed_vf}
    \rx{f_j}(t,x,u) =
    \begin{cases}
      f_j(t,x,u) & \text{if}\ x \in D_j, \\
      f_e(t,x,u) & \text{if}\ x \in G_e \times (0,\epsilon] \subset \rx{S_e},\ e \in {\cal N}_j. \\
    \end{cases}
  \end{equation}
\end{definition}
\noindent Note that the relaxation of the vector field is generally not continuous along each $G_e$, for $e \in {\cal N}_j$.
As we show in the algorithm in Fig.~\ref{fig:algo_relaxed_execution}, this discontinuous vector field does not lead to sliding modes on the guards~\cite{Utkin1977,Filippov1988}, since the vector field on the strips always points away from the guard.
An illustration of the relaxed vector field $\rx{f_j}$ on $\rx{D_j}$ is shown in Fig.~\ref{fig:relaxed_vf}.

\begin{figure}[t]
  \centering
  \input{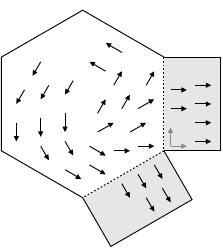tex_t}%
  \caption{Relaxed vector field $\rx{f_1}$ on $\rx{D_1}$.}%
  \label{fig:relaxed_vf}%
\end{figure}

The definitions of relaxed domains and relaxed vector fields allow us to construct a relaxation of the controlled hybrid systems as follows:
\begin{definition}
  \label{def:relaxed_hybrid_system}
  Let ${\cal H}$ be a controlled hybrid system.
  We say that the relaxation of ${\cal H}$ is a tuple:
  \begin{equation}
    \rx{\cal H} = ({\cal J}, \Gamma, \rx{\cal D}, U, \rx{\cal F}, \rx{\cal G}, \rx{\cal R}),
  \end{equation}
  where:
  \begin{newitemize}
  \item $\rx{\cal D} = \setb{\rx{D_j}}_{j \in {\cal J}}$ is the set of relaxations of the domains in ${\cal D}$, and each $\rx{D_j}$ is endowed with its induced length distance metric $\indmetric{\rx{D_j}}$;
  \item $\rx{\cal F} = \setb{\rx{f_j}}_{j \in {\cal J}}$ is the set of relaxations of the vector fields in ${\cal F}$;
  \item $\rx{\cal G} = \setb{\rx{G_e}}_{e \in \Gamma}$ is the set of relaxations of the guards in ${\cal G}$, where $\rx{G_e} = G_e \times \set{\rxsymbol} \subset \rx{S_e}$ for each $e \in \Gamma$; and,
  \item $\rx{\cal R} = \setb{\rx{R_e}}_{e \in \Gamma}$ is the set of relaxations of the reset maps in ${\cal R}$, where $\rx{R_e}\colon \rx{G_e} \to D_{j'}$ for each $e = (j,j') \in \Gamma$ and $\rx{R_e}(\zeta,\rxsymbol) = R_e(\zeta)$ for each $\zeta \in G_e$.
  \end{newitemize}
\end{definition}


\subsection{Relaxed Hybrid Quotient Space}

Analogous to the construction of the metric quotient space ${\cal M}$, using Definitions~\ref{def:induced_metric} and~\ref{def:relaxed_hybrid_system} we construct a unified metric space where executions of relaxations of controlled hybrid systems reside.
The result is a metrization of the \emph{hybrid colimit}~\cite{Ames2005b} (rather than a metrization of the hybrifold as in the previous section).
\begin{definition}
  \label{def:relaxed_space}
  Let $\rx{\cal H}$ be the relaxation of the controlled hybrid system ${\cal H}$.
  Also, let
  \begin{equation}
    \label{eq:rx_hatR}
    \rx{\widehat{R}}\colon \coprod_{e \in \Gamma} \rx{G_e} \to \coprod_{j \in {\cal J}} \rx{D_j}
  \end{equation}
  be defined by $\rx{\widehat{R}}(p) = \rx{R_e}(p)$ for each $p \in \rx{G_e}$.
  Then the \emph{relaxed hybrid quotient space} of $\rx{\cal H}$ is:
  \begin{equation}
    \label{eq:relaxed_space}
    \rx{\cal M} = \frac{\coprod_{j \in {\cal J}} \rx{D_j}}{\Lambda_\rx{\widehat{R}}}.
  \end{equation}
\end{definition}
\noindent The construction in Definition~\ref{def:relaxed_space} is illustrated in Fig.~\ref{fig:relaxed_space}.

\begin{figure}[t]
  \centering
  \input{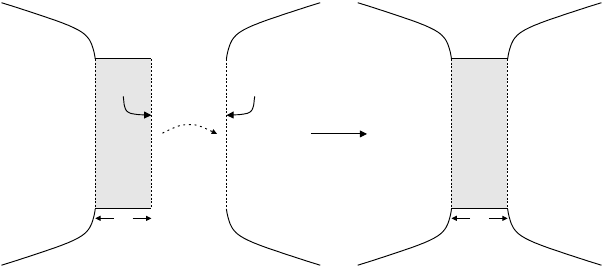tex_t}%
  \caption{The disjoint union of $\rx{D_1}$ and $\rx{D_2}$ (left), and the relaxed hybrid quotient space $\rx{\cal M}$ obtained from the relation $\Lambda_{\rx{\widehat{R}}}$ (right).}
  \label{fig:relaxed_space}
\end{figure}

We now show that the induced length distance on $\rx{\cal M}$ is indeed a metric.
We omit this proof since it is identical to the proof of Theorem~\ref{thm:metric_M}.
\begin{theorem}
  \label{thm:relaxed_metric_M}
  Let ${\cal H}$ be a controlled hybrid system, let $\rx{\cal H}$ be its relaxation, and let $\indmetric{\rx{\cal M}}$ be the $\rx{\widehat{R}}$--induced length distance of $\rx{\cal M}$, where $\rx{\widehat{R}}$ is defined in~\eqref{eq:rx_hatR}.
  Then $\indmetric{\rx{\cal M}}$ is a metric on $\rx{\cal M}$, and the topology it induces is equivalent to the $\rx{\widehat{R}}$--induced quotient topology.
\end{theorem}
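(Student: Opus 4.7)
The plan is to follow the structure of the proof of Theorem~\ref{thm:metric_M} line by line, but applied to the relaxed setting. First I would verify that each relaxed domain $\rx{D_j}$ is a normal topological space; this is automatic because $\parenb{\rx{D_j}, \indmetric{\rx{D_j}}}$ is already known to be a metric space by Theorem~\ref{thm:relaxed_metric_Dj}, and every metric space is normal. Next I would confirm that each relaxed reset map $\rx{R_e}$ is a closed map: by Definition~\ref{def:relaxed_hybrid_system} the relaxed guard $\rx{G_e} = G_e \times \set{\rxsymbol}$ is compact, since $G_e$ is compact by Condition~\ref{assump:top_guards_submanifolds} of Assumption~\ref{assump:topology} and $\set{\rxsymbol}$ is a singleton; moreover $\rx{R_e}$ is continuous because $R_e$ is continuous by Condition~\ref{assump:top_reset_cont}. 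The Closed Map Lemma (Lemma~A.19 in~\cite{Lee2003}) then applies.

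With these two structural facts in hand, the separation-of-points argument transfers directly. Let $p, q \in \coprod_{j \in {\cal J}} \rx{D_j}$ have distinct equivalence classes under $\Lambda_{\rx{\widehat{R}}}$. Because ${\cal J}$ and $\Gamma$ are finite, the equivalence classes $\equivclass{p}{\rx{\widehat{R}}}$ and $\equivclass{q}{\rx{\widehat{R}}}$ are each finite unions of closed sets in the disjoint union $\coprod_{j \in {\cal J}} \rx{D_j}$; here the ``closed'' statement uses that $\rx{R_e}$ is a closed map, so pre-images and images of the closed set $\rx{G_e}$ are closed. Normality then produces disjoint open neighborhoods around these two equivalence classes, yielding some $\delta > 0$ with $\indmetric{\rx{\cal M}}\parenb{\equivclass{p}{\rx{\widehat{R}}}, \equivclass{q}{\rx{\widehat{R}}}} > \delta$. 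Combined with non-negativity, symmetry, and subadditivity (which are automatic from Definition~\ref{def:induced_metric}), this shows $\indmetric{\rx{\cal M}}$ separates points and is therefore a bona fide metric on $\rx{\cal M}$.

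For the topological equivalence claim, I would invoke Exercise~3.1.14 in~\cite{Burago2001} to conclude $\rx{\cal M}$ is bounded in the quotient topology: each $\rx{D_j}$ is bounded under $\indmetric{\rx{D_j}}$, since $D_j$ is compact and each strip $\rx{S_e}$ has finite diameter controlled by the diameter of $G_e$ plus $\rxsymbol$. Then, exactly as in the proof of Theorem~\ref{thm:metric_M}, the extension of Theorem~5.8 in~\cite{Kelley1955} shows that the identity map from $\rx{\cal M}$ with the $\rx{\widehat{R}}$--induced quotient topology to $\rx{\cal M}$ with the metric topology of $\indmetric{\rx{\cal M}}$ is a homeomorphism.

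The main obstacle is verifying that the relaxed reset maps remain closed, because this is what drives the separation argument. Fortunately the relaxation procedure only attaches single-point factors $\set{\rxsymbol}$ to the existing compact guards, so compactness of $\rx{G_e}$ is inherited directly from $G_e$ and the Closed Map Lemma applies without modification. Once this step is in place, the rest of the argument is a verbatim transcription of the proof of Theorem~\ref{thm:metric_M}, which is precisely why the authors omit the details.
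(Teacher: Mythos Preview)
Your proposal is correct and takes essentially the same approach as the paper: the authors explicitly omit the proof, stating it is identical to that of Theorem~\ref{thm:metric_M}, and you have carefully verified that each ingredient of that argument (normality of the domains, closedness of the reset maps via the Closed Map Lemma, finiteness of equivalence classes, boundedness, and the homeomorphism via the extension of Theorem~5.8 in~\cite{Kelley1955}) transfers to the relaxed setting.
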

\noindent All $\rx{\widehat{R}}$--connected curves are continuous under the metric topology induced by $\indmetric{\rx{\cal M}}$ which will be important when we study executions of hybrid systems in Section~\ref{sec:simulation}.

As expected, the metric on $\rx{\cal M}$ converges pointwise to the metric on ${\cal M}$.
\begin{theorem}
  \label{thm:metric_convergence}
  Let ${\cal H}$ be a controlled hybrid system, and let $\rx{\cal H}$ be its relaxation.
  Then for each $p,q \in {\cal M}$, $\indmetric{\rx{\cal M}}(p,q) \to \indmetric{\cal M}(p,q)$ as $\rxsymbol \to 0$.
\end{theorem}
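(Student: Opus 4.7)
My plan is to prove the two inequalities $d_{i,\mathcal{M}}(p,q) \leq \indmetric{\rx{\cal M}}(p,q) \leq d_{i,\mathcal{M}}(p,q) + n\varepsilon$ where $n$ depends only on the curve in $\mathcal{M}$ chosen to approximate the infimum. First note that any $p,q \in \mathcal{M}$ can be regarded as points of $\rx{\mathcal{M}}$ via the canonical inclusion $D_j \hookrightarrow \rx{D_j}$, so both distances are defined.

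For the upper bound, fix $\delta > 0$ and choose an $\widehat{R}$-connected curve $\gamma\colon[0,1] \to \coprod_{j \in {\cal J}} D_j$ from $p$ to $q$ with finite partition $\{t_i\}_{i=0}^n$ and total length $L(\gamma) \leq d_{i,\mathcal{M}}(p,q) + \delta$. I would lift $\gamma$ to an $\rx{\widehat{R}}$-connected curve $\rx{\gamma}$ in $\rx{\mathcal{M}}$ by inserting, at each of the (at most $n$) jump times $t_i$ where $\lim_{t \uparrow t_i} \gamma(t) = \zeta \in G_e$ is $\widehat{R}$-related to $\gamma(t_i) = R_e(\zeta)$, a transverse traversal of the strip $\rx{S_e}$ from $(\zeta,0)$ to $(\zeta,\varepsilon) = \rx{G_e}$, after which $\rx{R_e}$ maps back to $R_e(\zeta) = \gamma(t_i)$. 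Using Definition~\ref{def:strip_metric}, this strip traversal contributes exactly $\varepsilon$ to the length, so $L(\rx{\gamma}) \leq L(\gamma) + n\varepsilon$. Hence $\indmetric{\rx{\cal M}}(p,q) \leq d_{i,\mathcal{M}}(p,q) + \delta + n\varepsilon$; letting $\varepsilon \to 0$ and then $\delta \to 0$ gives $\limsup_{\varepsilon \to 0} \indmetric{\rx{\cal M}}(p,q) \leq d_{i,\mathcal{M}}(p,q)$.

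For the lower bound, I would construct a ``collapse'' map $\pi\colon \rx{\mathcal{M}} \to \mathcal{M}$ that is the identity on each $D_j$ and sends $(\zeta,\tau) \in \rx{S_e}$ to the common $\widehat{R}$-equivalence class of $\zeta \in G_e \subset D_j$ and $R_e(\zeta) \in D_{j'}$. Given any $\rx{\widehat{R}}$-connected curve $\rx{\gamma}$ in $\rx{\mathcal{M}}$ from $p$ to $q$, I would verify that $\pi \circ \rx{\gamma}$ is $\widehat{R}$-connected in $\mathcal{M}$: a full strip traversal becomes a curve in $G_e$ immediately followed by the $\widehat{R}$-jump to its image, while a partial excursion into a strip and back collapses to a subcurve that lies entirely in $G_e \subset D_j$. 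In either case, the strip segment's length under $d_{\rx{S_e}}$ is at least the $d_{i,G_e}$-length of its $\zeta$-component, which in turn bounds the length of the projected subcurve in $D_j$ (since $G_e \subset D_j$ and $\indmetric{D_j}$ is the induced length metric). Thus $L(\pi \circ \rx{\gamma}) \leq L(\rx{\gamma})$ summand-by-summand across the partition, yielding $d_{i,\mathcal{M}}(p,q) \leq \indmetric{\rx{\cal M}}(p,q)$ for every $\varepsilon > 0$. Combined with the upper bound, this gives the claimed pointwise convergence.

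The step I expect to require the most care is the verification that $\pi \circ \rx{\gamma}$ is $\widehat{R}$-connected and that length is non-increased across strip segments, because one must track partitions carefully and use the inequality $d_{\rx{S_e}}((\zeta,\tau),(\zeta',\tau')) \geq \indmetric{G_e}(\zeta,\zeta') \geq \indmetric{D_j}(\zeta,\zeta')$ (the latter holding since any $\widehat{R}$-connected curve in $D_j$ between points of $G_e$ is admissible in the infimum defining $\indmetric{D_j}$). The lift construction is essentially cosmetic, relying on the finiteness of the partition guaranteed by Definition~\ref{def:connected_curve}.
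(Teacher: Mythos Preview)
Your proposal is correct and follows essentially the same two--inequality strategy as the paper. The upper bound is identical: pick a near--optimal $\widehat{R}$--connected curve $\gamma$ in $\coprod_j D_j$ with a finite partition of size $k$, insert an $\varepsilon$--length transverse strip segment at each jump to obtain an $\rx{\widehat{R}}$--connected curve, and conclude $\indmetric{\rx{\cal M}}(p,q)\le \indmetric{\cal M}(p,q)+k\varepsilon+\delta$. For the lower bound the paper simply cites the inequality $d_{\rx{S_e}}\bigl((p',0),(q',0)\bigr)\ge \indmetric{D_j}(p',q')$ (established in the proof that $\indmetric{\rx{D_j}}$ is a metric) and asserts that ``adding the strips only makes connected curves longer''; your collapse map $\pi$ is a more explicit packaging of exactly this observation, and your summand--by--summand length comparison is precisely the mechanism the paper leaves implicit. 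Your care with the order of limits ($\varepsilon\to 0$ before $\delta\to 0$, since $n$ depends on the chosen $\gamma$) is warranted.
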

\begin{proof}
  Abusing notation, let $L(\gamma)$ denote the length of any connected curve $\gamma$, defined as the sum of the lengths of each of its continuous sections under the appropriate metric.
  First, note that $\indmetric{\cal M}(p,q) \leq \indmetric{\rx{\cal M}}(p,q)$.
  This inequality follows since, as we argued in the proof of Theorem~\ref{thm:relaxed_metric_Dj}, given an edge $(j,j') \in \Gamma$, $d_{\rx{S_{(j,j')}}}\parenb{(p',0),(q',0)} \geq \indmetric{D_j}(p',q')$ for any pair of points $p',q' \in G_{(j,j')}$.
  Thus, adding the strips $\set{S_e}_{e \in \Gamma}$ in $\rx{\cal M}$ only make the length of a connected curve longer.

  Now let $\widehat{D} = \coprod_{j \in {\cal J}} D_j$ and $\rx{\widehat{D}} = \coprod_{j \in {\cal J}} \rx{D_j}$.
  Given $\delta > 0$, there exists $\gamma\colon [0,1] \to \widehat{D}$, an $\widehat{R}$--connected curve with partition $\set{t_i}_{i=0}^k$, such that $\gamma(0) = p$, $\gamma(1) = q$, and $\indmetric{\cal M}(p,q) \leq L(\gamma) \leq \indmetric{\cal M}(p,q) + \delta$.
  Moreover, without loss of generality let $\rx{\gamma}\colon [0,1] \to \rx{\widehat{D}}$ be an $\rx{\widehat{R}}$--connected curve that agrees with $\gamma$ on $\widehat{D}$, i.e.\ each section of $\rx{\gamma}$ on $\widehat{D}$ is identical, up to time scaling, to a section of $\gamma$.
  Thus $\rx{\gamma}$ has at most $k$ $\rxsymbol$--length extra sections, $L(\gamma) \leq L(\rx{\gamma}) \leq L(\gamma) + k \rxsymbol$, and $\indmetric{\rx{\cal M}}(p,q) \leq L(\rx{\gamma}) \leq \indmetric{\cal M}(p,q) + k \rxsymbol + \delta$.
  But this inequality is valid for each $\delta > 0$, hence $\indmetric{\rx{\cal M}}(p,q) \leq \indmetric{\cal M}(p,q) + k\rxsymbol$.
  The result follows after taking the limit as $\rxsymbol \to 0$.
\end{proof}
\noindent Note that Theorem~\ref{thm:metric_convergence} does not imply that the topology of $\rx{\cal M}$ converges to the topology of ${\cal M}$.
On the contrary, $\rx{\cal M}$ is homotopically equivalent to the graph $({\cal J},\Gamma)$ for each $\rxsymbol > 0$~\cite{Ames2005b}, whereas the topology of ${\cal M}$ may be different~\cite{Simic2005}.

We conclude this section by introducing metrics between curves on $\rx{\cal M}$.
\begin{definition}
  \label{def:curve_metric}
  Let $I \subset [0,\infty)$ a bounded interval.
  Given any two curves $\gamma, \gamma'\colon I \to \rx{\cal M}$, we define:
  \begin{equation}
    \rx{\rho_I} \parenb{\gamma, \gamma'} = \sup\setb{\indmetric{\rx{\cal M}}\parenb{\gamma(t), \gamma'(t)} \mid t \in I}.
  \end{equation}
\end{definition}
\noindent Our choice of the supremum among point--wise distances in Definition~\ref{def:curve_metric} is inspired by the $\sup$--norm for continuous real--valued functions.

\section{Relaxed Executions and Discrete Approximations}
\label{sec:simulation}

This section contains our main result: discrete approximations of executions of controlled hybrid systems, constructed using any variable step size numerical integration algorithm, converge uniformly to the actual executions.
This section is divided into three parts.
First, we define a pair of algorithms that construct executions of controlled hybrid systems and their relaxations, respectively.
Next, we develop a discrete approximation scheme for executions of relaxations of controlled hybrid systems.
Finally, we prove that these discrete approximations converge to orbitally stable executions of the original, non--relaxed, controlled hybrid system using the metric topologies developed in Section~\ref{sec:relaxation}.

\subsection{Execution of a Hybrid System}
\label{subsec:hybrid_execution}

We begin by defining an \emph{execution} of a controlled hybrid system.
This definition agrees with the traditional intuition about executions of controlled hybrid systems which describes an execution as evolving as a standard control system until a guard is reached, at which point a discrete transition occurs to a new domain using a reset map.
We provide an explicit definition to clarify technical details required in the proofs below.
Given a controlled hybrid system, ${\cal H}$, as in Definition~\ref{def:hybrid_system}, the algorithm in Fig.~\ref{fig:algo_hybrid_execution} defines an execution of ${\cal H}$ via construction.
A resulting execution, denoted $x$, is an $\widehat{R}$--connected curve from some interval $I \subset [0,\infty)$ to $\coprod_{j \in {\cal J}} D_j$.
Thus, abusing notation, we regard $x$ as a continuous curve on ${\cal M}$.
Abusing notation again, we regard $x$ as a piece--wise continuous curve on $\rx{\cal M}$ for each $\rxsymbol > 0$.
Fig.~\ref{fig:hybrid_execution} shows an execution undergoing a discrete transition.

\begin{figure}[tp]
  \begin{algorithmic}[1]
    \Require $t = 0$, $j \in {\cal J}$, $p \in D_j$, and $u \in BV(\R,U)$.
    \State Set $x(0) = p$.
    \Loop
    \State Let $\gamma\colon I \to D_j$ be the maximal integral curve of $f_j$ with control $u$ such that $\gamma(t) = x(t)$.\label{algo:hexec_max_curve}
    \State Let $t' = \sup I$ and $x(s) = \gamma(s)\ \forall s \in [t,t')$.
    \Statex \Comment \emph{Note if $t' < \infty$, then $\gamma(t') \in \bdry{D_j}$}.
    \If{$t' = \infty$, \textbf{or} $\nexists e \in {\cal N}_j$ such that $\gamma(t') \in G_e$}\label{algo:hexec_step_4}
    \State Stop.
    \EndIf
    \State Let $(j,j') \in {\cal N}_j$ be such that $\gamma(t') \in G_{(j,j')}$.\label{algo:hexec_choice_jump}
    \State Set $x(t') = R_{(j,j')}\parenb{\gamma(t')}$, $t = t'$, and $j = j'$.\label{algo:hexec_reset}
    \Statex \Comment \emph{Note $\gamma(t') \simfunc{\widehat{R}} x(t')$}.
    \EndLoop
  \end{algorithmic}
  \caption{Algorithm to construct an execution of a controlled hybrid system ${\cal H}$.}
  \label{fig:algo_hybrid_execution}
\end{figure}

Note that executions constructed using the algorithm in Fig.~\ref{fig:algo_hybrid_execution} are not necessarily unique.
Indeed, Definition~\ref{def:maximal_curve} implies that once a discrete transition has been performed, the execution is unique until a new transition is made; however, the choice in Step~\ref{algo:hexec_choice_jump} is not necessarily unique if the maximal integral curve passes through the intersection of multiple guards.
It is not hard to prove that a sufficient condition for uniqueness of executions is that all the guards are disjoint, even though, as we show in Section~\ref{sec:loc}, uniqueness of the executions can be obtained for some cases where guards do intersect.

\begin{figure*}[tp]
  \mbox{}%
  \hfill%
  \subfloat%
  [Discrete transition of an execution $x$.\label{fig:hybrid_execution}]%
  {%
    \input{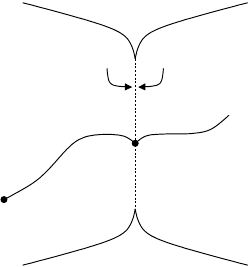tex_t}%
  }
  \hfill
  \subfloat%
  [Zeno execution $x$ accumulating at $p'$.\label{fig:zeno_accumulation}]%
  {%
    \input{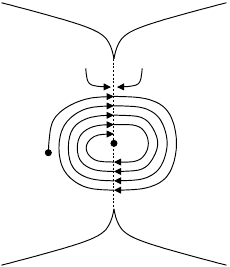tex_t}%
  }
  \hfill
  \subfloat%
  [Non--orbitally stable execution at initial condition $p'$.\label{fig:orbital_stability}]%
  {%
    \input{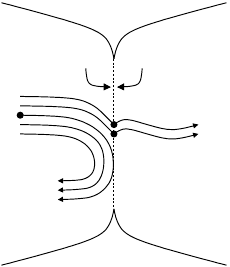tex_t}%
  }
  \hfill%
  \mbox{}%
  \caption{Examples of different executions for a two--mode hybrid system.}
\end{figure*}

With the definition of execution of a controlled hybrid system, we can define a class of executions unique to controlled hybrid systems.
\begin{definition}
  \label{def:zeno_execution}
  An execution is \emph{Zeno} when it undergoes an infinite number of discrete transitions in a finite amount of time.
  Hence, there exists $T > 0$, called the \emph{Zeno time}, such that the execution is only defined on $I = [0,T)$.
\end{definition}
\noindent Zeno executions are hard to simulate since they apparently require an infinite number of reset map evaluations, an impossible task to implement on a digital computer.
A consequence of the algorithm in Fig.~\ref{fig:algo_hybrid_execution} is that if $x\colon I \to {\cal M}$ is an execution such that $T = \sup I < \infty$, then either
\begin{enumparen}
\item $x$ has a finite number of discrete transitions on $I = [0,T]$, and $x(T) \in \bdry{D_j}$ for some $j \in {\cal J}$, or
\item $x$ is a Zeno execution and $I = [0,T)$.
\end{enumparen}

We now introduce a property of Zeno executions of particular interest in this paper:
\begin{definition}
  \label{def:zeno_accumulation}
  Let ${\cal H}$ be a controlled hybrid system, $p \in {\cal M}$, $u \in BV(\R,U)$, and $x\colon [0,T) \to {\cal M}$ be a Zeno execution with initial condition $p$, control $u$, and Zeno time $T$.
  $x$ \emph{accumulates at $p' \in {\cal M}$} if $\lim_{t \to T} \indmetric{\cal M}\parenb{x(t),p'} = 0$.
\end{definition}
\noindent Examples of Zeno executions that do not accumulate can be found in~\cite{Zhang2001}.
Fig.~\ref{fig:zeno_accumulation} shows a Zeno execution that accumulates at $p'$.
Note that for $p'$ to be a Zeno accumulation point, it must belong to a guard of a controlled hybrid system.

Since ${\cal M}$ is a metric space, we can introduce the concept of continuity of a hybrid execution with respect to its initial condition and control input in a straightforward way. Employing this definition, we can define the class of executions that are numerically approximable:
\begin{definition}
  \label{def:orbital_stability}
  Let ${\cal H}$ be a controlled hybrid system.
  Denote by $x_{(p,u)}\colon I_{(p,u)} \to {\cal M}$ a hybrid execution of ${\cal H}$ with initial condition $p \in {\cal M}$ and control $u \in BV(\R,U)$.
  Given $T > 0$, we say that $x_{(p,u)}$ is \emph{orbitally stable in $[0,T]$ at $(p,u)$} if there exists a neighborhood of $(p,u)$, say $N_{(p,u)} \subset {\cal M} \times BV(\R,U)$, such that:
  \begin{enumparen}
  \item $x_{(p',u')}$ is unique for each $(p',u') \in N_{(p,u)}$.
  \item $[0,T] \subset I_{(p',u')}$ for each $(p',u') \in N_{(p,u)}$.
  \item The map $(p',u') \mapsto x_{(p',u')}(t)$ is continuous at $(p,u)$ for each $t \in [0,T]$.
  \end{enumparen}
\end{definition}
\noindent
As observed in Section III.B in~\cite{Lygeros2003}, executions that are not orbitally stable are difficult to approximate with a general algorithm.
Figure~\ref{fig:orbital_stability} shows a non--orbitally stable execution that intersects the guard tangentially, and note that executions initialized arbitrarily close to $p' \in D_1$ undergo different sequences of transitions.
Unfortunately, there is presently no general test (analytical or numerical) that ensures a given execution is orbitally stable.
Theorem III.2 in~\cite{Lygeros2003} provides one set of sufficient conditions ensuring orbital stability.

%


\subsection{Relaxed Execution of a Hybrid System}
\label{sec:rx_execution}

Next, we define the concept of \emph{relaxed execution} for a relaxation of a controlled hybrid system.
The main idea is that, once a relaxed execution reaches a guard, we continue integrating over the strip with the relaxed vector field, $f_e$, as in Definition~\ref{def:relaxed_vf}.
Given the controlled hybrid system, ${\cal H}$ and its relaxation, $\rx{\cal H}$ for some $\rxsymbol > 0$, the algorithm in Fig.~\ref{fig:algo_relaxed_execution} defines a relaxed execution of $\rx{\cal H}$ via construction.
The resulting relaxed execution, denoted $\rx{x}$, is a continuous function defined from an interval $I \subset [0,\infty)$ to $\rx{\cal M}$.
Note that this algorithm is only defined for initial conditions belonging to $D_j$ for some $j \in {\cal J}$ since the strips are artificial objects that do not appear in ${\cal H}$.
The generalization to all initial conditions is straighforward; we omit it to simplify the presentation.

\begin{figure}[tp]
  \begin{algorithmic}[1]
    \Require $t = 0$, $j \in {\cal J}$, $p \in D_j$, and $u \in BV(\R,U)$.
    \State Set $\rx{x}(0) = p$.
    \Loop
    \State Let $\gamma\colon I \to D_j$, the maximal integral curve of $f_j$ with control $u$ such that $\gamma(t) = \rx{x}(t)$.\label{algo:rexec_max_curve}
    \State Let $t' = \sup I$ and $\rx{x}(s) = \gamma(s)\ \forall s \in [t,t')$.
    \Statex \Comment \emph{Note if $t' < \infty$, then $\gamma(t') \in \bdry{D_j}$}.
    \If{$t' = \infty$, \textbf{or} $\nexists e \in {\cal N}_j$ such that $\gamma(t') \in G_e$}\label{algo:rexec_step_4}
    \State Stop.
    \EndIf
    \State Let $(j,j') \in {\cal N}_j$ such that $\gamma(t') \in G_{(j,j')}$, hence $\parenb{\gamma(t'),0} \in \rx{S_{(j,j')}}$.\label{algo:rexec_choice_jump}
    \State Set $\rx{x}(t' + \tau) = \parenb{\gamma(t'),\tau}\ \forall \tau \in [0,\rxsymbol)$.\label{algo:rexec_strip_integral}
    \State Set $\rx{x}(t' + \rxsymbol) = \rx{R_{(j,j')}}\parenb{\gamma(t'),\rxsymbol}$, $t = t' + \rxsymbol$, and $j = j'$.
    \Statex \Comment \emph{Note $\parenb{\gamma(t'),\rxsymbol}  \simfunc{\rx{\widehat{R}}}  \rx{x}(t' + \rxsymbol)$}.
    \EndLoop
  \end{algorithmic}
  \caption{Algorithm to construct a relaxed execution of a relaxation of a controlled hybrid system, $\rx{\cal H}$.}
  \label{fig:algo_relaxed_execution}
\end{figure}

Step~\ref{algo:rexec_strip_integral} of the algorithm in Fig.~\ref{fig:algo_relaxed_execution} relaxes each instantaneous discrete transition by integrating over the vector field on a strip, hence forming a continuous curve on $\rx{\cal M}$.
Also note that our definition for the relaxed execution over each strip $\rx{S_e}$, also in Step~\ref{algo:rexec_strip_integral}, is exactly equal to the maximal integral curve of $f_e$.
Fig.~\ref{fig:relaxed_execution} shows an example of a relaxed mode transition produced by the algorithm in Fig.~\ref{fig:algo_relaxed_execution}.
Given a hybrid system ${\cal H}$ and its relaxation $\rx{\cal H}$, the relaxed execution of $\rx{\cal H}$ produced by the algorithm in Fig.~\ref{fig:algo_relaxed_execution} is a delayed version of the execution of ${\cal H}$ produced by the algorithm in Fig.~\ref{fig:algo_hybrid_execution}, since the relaxed version has to expend $\rxsymbol$ time units during each discrete transition.
In that sense, our definition of relaxed execution is equivalent to an execution of a \emph{regularized hybrid system}~\cite{JohanssonEgerstedt1999}.

\begin{figure}[t]
  \centering
  \input{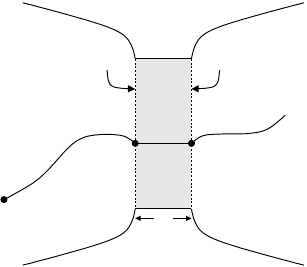tex_t}%
  \caption{Relaxed mode transition of a relaxed execution $\rx{x}$ in a two--mode relaxed hybrid dynamical system.}%
  \label{fig:relaxed_execution}%
\end{figure}

Note that if a relaxed execution is unique for a given initial condition and input, then the corresponding hybrid execution is also unique, but not vice versa.
Indeed, consider the case of a hybrid execution performing a single discrete transition at a point, say $p$, where two guards intersect, i.e.\ $p \in G_e$ and $p \in G_{e'}$, such that $R_e(p) = R_{e'}(p)$.
In this case the hybrid execution is unique, but its relaxed counterpart either evolves via $S_e$ or $S_{e'}$, hence obtaining $2$ different executions.
Nevertheless, both relaxed executions reach the same point after evolving over the strip.

Next, we state our first convergence theorem.
\begin{theorem}
  \label{thm:relaxed_execution_convergence}
  Let ${\cal H}$ be a controlled hybrid system and $\rx{\cal H}$ be its relaxation.
  Let $p \in {\cal M}$, $u \in BV(\R,U)$, $x\colon I \to \rx{\cal M}$ be an execution of ${\cal H}$ with initial condition $p$ and control $u$, and let $\rx{x}\colon \rx{I} \to \rx{\cal M}$ be a corresponding relaxed execution of $x$.
  Assume that the following conditions are satisfied:
  \begin{enumparen}
  \item $x$ is orbitally stable with initial condition $p$ and control $u$;
  \item $x$ has a finite number of discrete transitions or is a Zeno execution that accumulates; and
  \item there exists $T > 0$ such that for each $\rxsymbol$ small enough, $[0,T] \subset I \cap \rx{I}$ if $x$ has a finite number of discrete transitions, and $[0,T)\subset I \cap \rx{I}$ if $x$ is Zeno.
  \end{enumparen}
  Then, $\lim_{\rxsymbol \to 0} \rx{\rho_{[0,T]}}\parenb{x,\rx{x}} = 0$.
\end{theorem}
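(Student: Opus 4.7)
The plan is to exploit the structural relationship between $x$ and $\rx{x}$ established by the construction algorithms: they share identical continuous flows, and $\rx{x}$ merely inserts an $\rxsymbol$-long strip traversal in place of each instantaneous transition. After $\rx{x}$ completes its $k$-th strip traversal, one has the exact identity $\rx{x}(t) = x(t - k\rxsymbol)$ on the subsequent flow segment, since the endpoint of the $k$-th strip equals $x(\tau_k)$ by construction of $\rx{\widehat{R}}$ and both trajectories then follow the same vector field $f_{j_{k+1}}$ by Lemma~\ref{lemma:exist_and_unique}. On a strip segment at time $t$, $\indmetric{\rx{\cal M}}(\rx{x}(t), x(\tau_k)) \leq \rxsymbol$ by Definition~\ref{def:strip_metric} and the $\rx{\widehat{R}}$-identification between the strip endpoint and the post-reset state. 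I would use these two facts as the common engine for both hypothesized subcases.

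In the finite-transition subcase with $N$ transitions of $x$ in $[0, T]$, view $x$ as a curve in $\rx{\cal M}$; it is continuous on each flow segment (Theorem~\ref{thm:relaxed_metric_M}) and has jumps of size at most $\rxsymbol$ at each transition time. Assumption~\ref{assump:lipschitz_vf} yields a Lipschitz-based modulus of continuity $\omega_x$ on each flow segment, and combining $\omega_x(k\rxsymbol) \leq \omega_x(N\rxsymbol)$ with the two bounds above gives $\rx{\rho_{[0,T]}}(x, \rx{x}) \leq \omega_x(N\rxsymbol) + 2\rxsymbol$, which tends to zero.

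For the Zeno subcase, fix $\delta > 0$ and use accumulation at $p'$ to choose $\tau < T$ with $\indmetric{\cal M}(x(t), p') < \delta/4$ on $[\tau, T)$. Since $x|_{[0, \tau]}$ has only finitely many transitions, the finite-case estimate gives $\rx{\rho_{[0,\tau]}}(x, \rx{x}) < \delta/4$ for small $\rxsymbol$. For $t \in [\tau, T)$, the critical observation is that $\rx{x}$ itself can only complete finitely many transitions in $[0, T)$ for each fixed $\rxsymbol$, because each transition of $\rx{x}$ consumes $\rxsymbol$ time while the Zeno times $\tau_k$ cluster at $T$. Denoting by $\sigma(\rxsymbol)$ the completion time of $\rx{x}$'s final transition before $T$, one checks $T - \sigma(\rxsymbol) \to 0$ as $\rxsymbol \to 0$: indeed $\sigma(\rxsymbol) \geq \tau_{k(\rxsymbol)}$ with $k(\rxsymbol) \to \infty$, so $\tau_{k(\rxsymbol)} \to T$. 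Up to $\sigma(\rxsymbol)$, the time-shift identity together with $x|_{[\tau, T)} \subset B_{\delta/4}(p')$ keeps $\rx{x}(t)$ within $B_{\delta/2}(p')$; beyond $\sigma(\rxsymbol)$, the vanishing flow duration $T - \sigma(\rxsymbol)$ plus Lipschitz bounds on the vector field confine $\rx{x}$ similarly. These contributions combine to yield $\rx{\rho_{[0,T]}}(x, \rx{x}) \leq \delta$ for $\rxsymbol$ small enough.

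The main obstacle I expect is the Zeno subcase, specifically (i) rigorously establishing $T - \sigma(\rxsymbol) \to 0$ from the geometric clustering of the transition times, and (ii) controlling the cumulative effect of an unbounded (in $\rxsymbol$) number of $\rxsymbol$-strip perturbations near the accumulation point. The orbital stability hypothesis is essential here: without it, a small $\rxsymbol$-perturbation could steer $\rx{x}$ onto a divergent branch through a tangential guard (as in Fig.~\ref{fig:orbital_stability}) and defeat the comparison entirely; orbital stability prevents this by guaranteeing that the sequence of modes and guards encountered by $\rx{x}$ near $p'$ matches that of $x$.
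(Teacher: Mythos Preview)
Your proposal contains a genuine gap at its foundation. The claimed exact identity $\rx{x}(t) = x(t - k\rxsymbol)$ on flow segments after the $k$-th strip is false in the generality of the theorem. The vector fields $f_j(t,x,u)$ depend explicitly on $t$, and both algorithms apply the control at the \emph{actual} clock time (see Step~\ref{algo:rexec_max_curve} of Fig.~\ref{fig:algo_relaxed_execution} and Lemma~\ref{lemma:exist_and_unique}). Hence after the first strip, $\rx{x}$ starts from $x(\tau_1)$ at time $\tau_1+\rxsymbol$ and solves $\dot y(t)=f_{j'}\bigl(t,y(t),u(t)\bigr)$, whereas the shifted curve $t\mapsto x(t-\rxsymbol)$ solves $\dot y(t)=f_{j'}\bigl(t-\rxsymbol,y(t),u(t-\rxsymbol)\bigr)$. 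These differ by an $O(\rxsymbol)$ forcing term, not zero, and the discrepancy compounds: $\rx{x}$ will generically hit the next guard at a different point and time than $x$, so the claim ``the endpoint of the $k$-th strip equals $x(\tau_k)$'' already fails for $k\ge 2$. Your modulus-of-continuity bound $\omega_x(N\rxsymbol)+2\rxsymbol$ therefore does not follow.

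The paper closes exactly this gap. It applies Picard's Lemma (Lemma~5.6.3 in~\cite{Polak1997}) to compare $\rx{x}(t+\rxsymbol)$ with $x(t)$ on a common flow segment, picking up the term $\int_\tau^t \norm{u(s)-u(s+\rxsymbol)}\diff s$, which is controlled by $\totvar{u}\,\rxsymbol$ via the bounded-variation hypothesis; this is where the $BV$ assumption on $u$ is actually used. It then allows the second guard-crossing time $\rx{\tau}$ of $\rx{x}$ to differ from that of $x$ and shows $\abs{\rx{\tau}-\tau'}=O(\rxsymbol)$ using orbital stability, after which a case analysis over five subintervals yields the $O(\rxsymbol)$ bound. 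Your Zeno outline (restrict to $[0,T-\delta]$, use accumulation near $p'$, count strip traversals) matches the paper's strategy and is fine once the finite-transition estimate is repaired, but as written it rests on the incorrect identity.
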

\begin{proof}
  We provide the main arguments of the proof, omitting some details in the interest of brevity.
  First, given $j \in {\cal J}$ and $[\tau,\tau') \subset [0,T]$ such that $x(t) \in D_j$ for each $t \in [\tau,\tau')$, then, since $x|_{[\tau,\tau')}$ is absolutely continuous, for each $t,t' \in [\tau,\tau')$,
  \begin{equation}
    \label{eq:proof_thm27_xlipschitz}
    \indmetric{\rx{\cal M}}\parenb{x(t),x(t')}
    \leq L_{\indmetric{D_j}}\parenb{x|_{[t,t')}}
    = \int_t^{t'} \norm{f_j\parenb{s,x(s),u(s)}} \diff{s}
    \leq K (t' - t),
  \end{equation}
  where $K = \sup\setb{\normb{f_j\parenb{t,x,u}} \mid j \in {\cal J},\ t \in [0,T],\ x \in \rx{\cal M}, u \in U} < \infty$.

  Second, let $k \in \N$ and $\set{\lambda_i}_{i=0}^k \subset [0,1]$ be a sequence such that $0 = \lambda_0 \leq \lambda_1 \leq \ldots \leq \lambda_k = 1$.
  Given $\rxsymbol > 0$, let $\gamma_t\colon [0,1] \to \rx{\cal M}$ be defined by $\gamma_t(\lambda) = \rxcustom{x}{\lambda\rxsymbol}(t)$.
  Thus, by Theorem~\ref{thm:metric_convergence} and the algorithm in Figure~\ref{fig:algo_relaxed_execution}, $\gamma_t(0) = \rxcustom{x}{0}(t) = x(t)$ and $\gamma_t(1) = \rx{x}(t)$.
  Assume that $\rx{x}(t) \in D_j$ for each $t \in [\tau+\rxsymbol,\tau'+\rxsymbol)$, where $[\tau,\tau')$ is as defined above.
  Using Picard's Lemma (Lemma~5.6.3 in~\cite{Polak1997}), for each $t \in [\tau+\rxsymbol,\tau')$,
  \begin{multline}
    \normb{\rx{x}(t+\rxsymbol) - x(t)} \leq e^{L(t-\tau)} \biggl(\norm{\rx{x}(\tau+\rxsymbol) - x(\tau)} +\\
    + \int_\tau^t \norm{f_j\parenb{s,x(s),u(s)} - f_j\parenb{s+\rxsymbol,x(s),u(s+\rxsymbol)}} \diff{s}\biggr)\\
    \leq e^{L(t-\tau)} \biggl(\norm{\rx{x}(\tau+\rxsymbol) - x(\tau)}
    + L \int_\tau^t \rxsymbol + \norm{u(s) - u(s+\rxsymbol)} \diff{s}\biggr)\\
    \leq e^{L(t-\tau)}  \parenB{\norm{\rx{x}(\tau+\rxsymbol)  -  x(\tau)}  + 
      \parenb{L  +  \totvar{u}} (t-\tau) \rxsymbol},
  \end{multline}
  where we have used a standard property of the functions of bounded variation (Exercise~5.1 in~\cite{Ziemer1989}).
  Thus, if we assume that $\norm{\rx{x}(\tau+\rxsymbol) - x(\tau)} = \bigo{\rxsymbol}$, i.e.\ that there exists $C > 0$ such that $\norm{\rx{x}(\tau+\rxsymbol) - x(\tau)} \leq C \rxsymbol$, then $\norm{\rx{x}(t+\rxsymbol) - x(t)} = \bigo{\rxsymbol}$ for each $t \in [\tau+\rxsymbol,\tau')$.
  Using the same argument as above
  $\norm{\rxcustom{x}{\lambda_{i+1}\rxsymbol}(t+\rxsymbol) - \rxcustom{x}{\lambda_i \rxsymbol}(t)}
  = \bigo{(\lambda_{i+1} - \lambda_i) \rxsymbol}$,
  which implies that $\gamma_t$ is continuous for each $t \in [\tau+\rxsymbol,\tau')$, and that $L(\gamma_t) = \bigo{\rxsymbol}$, hence $\indmetric{D_j}\parenb{\rx{x}(t+\rxsymbol),x(t)} = \bigo{\rxsymbol}$.

  Assuming now that $x$ performs $2$ discrete transitions at times $\tau,\tau' \in [0,T]$, such that $\tau + \rxsymbol < \tau'$, transitioning from mode $j$ to $j'$, and the from mode $j'$ to $j''$.
  Note that, by definition, $x|_{[0,\tau)} = \rx{x}|_{[0,\tau)}$.
  Moreover, since $x$ is orbitally stable, we know that $\rx{x}$ performs the same $2$ discrete transitions for $\rxsymbol$ small enough.
  Let $\rx{\tau}+\rxsymbol \in [0,T]$ be such that $\rx{x}(\rx{\tau}+\rxsymbol) \in G_{(j',j'')}$.
  Note that $\abs{\rx{\tau} - \tau'} = \bigo{\rxsymbol}$ since $\rx{x} \to x$ uniformly and $x$ is Lipschitz continuous (both propositions shown above).
  Assume that $\tau' \leq \rx{\tau}+\rxsymbol$ and consider the following upper bounds:
  \begin{enumparen}
  \item If $t \in [\tau,\tau+\rxsymbol)$, then $x(t) \in D_{j'}$ and $\rx{x}(t) \in \rx{S_{(j,j')}}$, thus:
    \begin{equation}
      \label{eq:thm26_ineq1}
      \indmetric{\rx{\cal M}}\parenb{x(t),\rx{x}(t)}
      \leq \indmetric{D_{j'}}\parenb{x(t),x(\tau)}
      + d_\rx{S_{(j,j')}}\parenb{x(\tau),\rx{x}(t)}
      = \bigo{\rxsymbol}.
    \end{equation}
  \item \label{thm:27_proof_case2}
    If $t \in [\tau+\rxsymbol,\tau')$, then $x(t),\rx{x}(t) \in D_{j'}$, thus, using the bound obtained above:
    \begin{equation}
      \indmetric{\rx{\cal M}}\parenb{x(t),\rx{x}(t)}
      \leq \indmetric{D_{j'}}\parenb{x(t),x(t-\rxsymbol)}
      + \indmetric{D_{j'}}\parenb{x(t-\rxsymbol),\rx{x}(t)}
      = \bigo{\rxsymbol}.
    \end{equation}
  \item If $t \in [\tau',\rx{\tau}+\rxsymbol)$, then $x(t) \in D_{j''}$ and $\rx{x} \in D_{j'}$, thus, denoting $\lim_{t \uparrow \tau'} x(t) = x(\tau'_-)$:
    \begin{multline}
      \indmetric{\rx{\cal M}}\parenb{x(t),\rx{x}(t)}
      \leq \indmetric{D_{j''}}\parenb{x(t),x(\tau')}
      + d_\rx{S_{(j',j'')}}\parenb{x(\tau'),x(\tau'_-)} +\\
      + \indmetric{D_{j'}}\parenb{x(\tau'_-),\rx{x}(\rx{\tau}+\rxsymbol)}
      + \indmetric{D_{j'}}\parenb{\rx{x}(\rx{\tau}+\rxsymbol),\rx{x}(t)}
      \leq \bigo{\rxsymbol}.
    \end{multline}
  \item If $t \in [\rx{\tau}+\rxsymbol,\rx{\tau}+2\rxsymbol)$, then $x(t) \in D_{j''}$ and $\rx{x} \in \rx{S_{(j'.j'')}}$, thus:
    \begin{equation}
      \indmetric{\rx{\cal M}}\parenb{x(t),\rx{x}(t)}
      \leq \indmetric{D_{j''}}\parenb{x(t),x(\tau')}
      + d_\rx{S_{(j',j'')}}\parenb{x(\tau'),\rx{x}(t)}
      \leq \bigo{\rxsymbol}.
    \end{equation}
  \item If $t \in [\rx{\tau}+2\rxsymbol,T]$, then $x(t),\rx{x}(t) \in D_{j''}$, thus we get the same bound as in  case~\ref{thm:27_proof_case2}.
  \end{enumparen}
  Therefore, $\rx{\rho_{[0,T]}}(x,\rx{x}) = \bigo{\rxsymbol}$ as desired.
  Note that the general case, with an arbitrary number of discrete transitions, follows by using the a similar argument as above by properly considering the time intervals and then applying the upper bounds inductively.

  Next, let us consider the case when $x$ is a Zeno execution that accumulates on $p'$.
  Let $\delta > 0$, then $x|_{[0,T-\delta]}$ has a finite number of discrete transitions, and as shown above, $\indmetric{\rx{\cal M}}\parenb{x(T-\delta),\rx{x}(T-\delta)} = \bigo{\rxsymbol}$.
  Moreover, $\indmetric{\rx{\cal M}}\parenb{x(T-\delta),x(t)} = \bigo{\delta}$ and $\indmetric{\rx{\cal M}}\parenb{\rx{x}(T-\delta),\rx{x}(t)} = \bigo{\delta}$ for each $t \in [T-\delta,T)$.
  The conclusion follows by noting that these bounds are valid for each $\delta > 0$.
\end{proof}

\subsection{Discrete Approximations}
Finally, we are able to define the \emph{discrete approximation of a relaxed execution}, which is constructed as an extension of any existing ODE numerical integration algorithm.
Given a controlled hybrid system ${\cal H}$, $\dalgo{{\cal A}_j}\colon \R \times \R^{n_j} \times U \to \R^{n_j}$, where $\stepsymbol > 0$ and $j \in {\cal J}$, is a \emph{numerical integrator of order $\omega$}, if given $p \in D_j$, $u \in BV(\R,U)$, $x$ the maximal integral curve of $f_j$ with initial condition $p$ and control $u$, $N = \floor{\frac{T}{\stepsymbol}}$, and a sequence $\set{z_k}_{k=0}^N$ with $z_0 = p$ and $z_{k+1} = \dalgo{{\cal A}_j}\parenb{k\stepsymbol,z_k,u(k\stepsymbol)}$, then $\sup \setb{\norm{x(k\stepsymbol) - z_k} \mid k \in \set{0,\ldots,N}} = \bigo{\stepsymbol^\omega}$.
This definition of numerical integrator is compatible with commonly used algorithms, including Forward and Backward Euler algorithms and the family of Runge--Kutta algorithms (Chapter~7 in~\cite{LeVeque2007}).
The algorithm in Fig.~\ref{fig:algo_discrete_approx} defines a discrete approximation of a relaxed execution of $\rx{\cal H}$.
The resulting discrete approximation, for a step size $\stepsymbol > 0$, denoted by $\dapprox{z}$, is a function from a closed interval $I \subset [0,\infty)$ to $\rx{\cal M}$.

\begin{figure}[tp]
  \begin{algorithmic}[1]
    \Require $\stepsymbol > 0$, $k = 0$, $j \in {\cal J}$, and $p \in D_j$.
    \State Set $t_0 = 0$ and $\dapprox{z}(0) = p$.
    \Loop
    \State Set $n'  =  \inf\setb{n  \in  \N  \mid  \dalgocustom{{\cal A}_j}{\stepsymbol 2^{-n}} \parenb{t_k, \dapprox{z}(t_k), u(t_k)}  \in  \rx{D_j}}$.\label{algo:dapprox_find_n}
    \If{$n' = \infty$} \label{algo:dapprox_stop_cond}
    \State \Return $\dapprox{z}|_{[0,t_k]}$.
    \EndIf
    \State Set $t_{k+1} = t_k + \stepsymbol 2^{-n'}$.\label{algo:dapprox_tkpo}
    \State Set $\dapprox{z}(t_{k+1}) = \dalgocustom{{\cal A}_j}{\stepsymbol 2^{-n'}} \parenb{t_k, \dapprox{z}(t_k), u(t_k)}$.
    \State Set $\dapprox{z}(t) = \frac{t_{k+1}-t}{t_{k+1} - t_k} \dapprox{z}(t_k) + \frac{t-t_k}{t_{k+1} - t_k} \dapprox{z}(t_{k+1})$ $\forall t \in [t_k, t_{k+1}]$.
    \If{$\exists (j,j') \in {\cal N}_j$ such that $\dapprox{z}(t_{k+1}) \in \rx{S_{(j,j')}}$}
    \State Set $(q,\tau) = \dapprox{z}(t_{k+1}) \in \rx{S_{(j,j')}}$.
    \State Set $t_{k+2} = t_{k+1} + \epsilon - \tau$.
    \State Set $\dapprox{z}(t)  =  (q, t - t_{k+1} + \tau)\ \forall t  \in  [t_{k+1},t_{k+2})$.\label{algo:dapprox_strip_integral}
    \State Set $\dapprox{z}(t_{k+2})  =  \rx{R_{(j,j')}}(q,\rxsymbol)$, $k  =  k+2$, and $j  =  j'$.
    \Statex \Comment \emph{Note $(q,\rxsymbol)  \simfunc{\rx{\widehat{R}}}  \dapprox{z}(t_{k+2})$}.
    \Else
    \State Set $k = k+1$.
    \EndIf
    \EndLoop
  \end{algorithmic}
  \caption{Discrete approximation of a relaxed execution of the relaxation of a controlled hybrid system $\rx{\cal H}$.}
  \label{fig:algo_discrete_approx}
\end{figure}

We now make several remarks about the algorithm in Fig.~\ref{fig:algo_discrete_approx}.
First, the condition in Step~\ref{algo:dapprox_stop_cond} can only be satisfied, i.e.\ the Algorithm only stops, if $\dapprox{z}(t_k) \in \bdry{D_j}$ and $f_j\parenb{t_k,\dapprox{z}(t_k),u(t_k)}$ is outward--pointing, since otherwise a smaller step--size would produce a valid point.
Second, the function $\dapprox{z}$ is continuous on $\rx{\cal M}$.
Third, and most importantly, similar to the algorithm in Fig.~\ref{fig:algo_relaxed_execution}, the curve assigned to $\dapprox{z}$ in Step~\ref{algo:dapprox_strip_integral} is exactly the maximal integral curve of $f_e$ while on the strip.
By relaxing the guards using strips, and then endowing the strips with a trivial vector field, we avoid having to find the exact point where the trajectory intersects a guard.
Our relaxation does introduce an error in the approximation, but as we show in Theorem~\ref{thm:discrete_approx_conv}, the error is of order $\rxsymbol$.
Fig.~\ref{fig:discrete_approx} shows a discrete approximation produced by the algorithm in Fig.~\ref{fig:algo_discrete_approx} as it performs a mode transition.

\begin{figure}[t]
  \centering
  \input{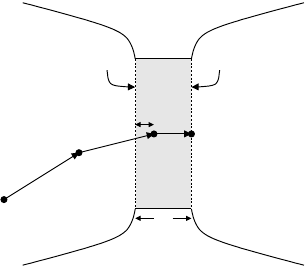tex_t}%
  \caption{Discrete approximation $\dapprox{z}$ of a relaxed execution in a two-mode hybrid dynamical system.}%
  \label{fig:discrete_approx}
\end{figure}

\begin{theorem}
  \label{thm:discrete_approx_conv}
  Let ${\cal H}$ be a controlled hybrid system and $\rx{\cal H}$ its relaxation.
  Let $p \in {\cal M}$, $u \in BV(\R,U)$, and let $x\colon I \to \rx{\cal M}$ be a orbitally stable execution of ${\cal H}$ with initial condition $p$ and control $u$.
  Furthermore, let $\rx{x}\colon \rx{I} \to \rx{\cal M}$ be a relaxed execution with initial condition $p$ and control $u$, and let $\dapprox{z}\colon \dapprox{I} \to \rx{\cal M}$ be its discrete approximation.
  If $[0,T] \subset \rx{I} \cap \dapprox{I}$ for each $\rxsymbol$ and $\stepsymbol$ small enough, then there exists $C > 0$ such that $\lim_{h \to 0} \rx{\rho_{[0,T]}}\parenb{\rx{x},\dapprox{z}} \leq C \rxsymbol$.
\end{theorem}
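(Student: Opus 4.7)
The plan is to decompose $[0,T]$ into alternating flow segments and strip traversals according to the transitions of $\rx{x}$, then bound the error on each segment via classical ODE numerical convergence and propagate inductively through the strip resets.

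First, I would establish that $\rx{x}$ performs only a finite number $N$ of transitions on $[0,T]$, since each strip traversal costs $\rxsymbol$ units of time. Combining orbital stability of $x$ with standard continuity of ODE flows with respect to initial conditions (via Assumption~\ref{assump:lipschitz_vf} and Gronwall's inequality), for $\stepsymbol$ small enough the discrete approximation $\dapprox{z}$ undergoes the same $N$ mode transitions as $\rx{x}$ in the same order. This lets me pair up the flow segments and strip traversals of $\rx{x}$ and $\dapprox{z}$.

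Next, I would induct over these segments. On each flow segment inside some $D_{j_k}$, both trajectories satisfy the same controlled ODE, and an order-$\omega$ bound for the numerical integrator combined with Gronwall's inequality yields
\begin{equation*}
\sup_t \indmetric{\rx{D_{j_k}}}\parenb{\rx{x}(t),\dapprox{z}(t)} \leq e^{LT}\parenb{\delta_k + C_1 \stepsymbol^\omega},
\end{equation*}
where $\delta_k$ denotes the discrepancy at the start of the segment. At each strip traversal, the bisection in the algorithm (which takes the largest step $\stepsymbol 2^{-n'} \leq \stepsymbol$ that still lands in $\rx{D_j}$) ensures $\dapprox{z}$ enters $\rx{S_e}$ at some $(q,\tau)$ with $\tau = \bigo{\stepsymbol}$ and $q$ within $\bigo{\stepsymbol}$ of the true entry point of $\rx{x}$. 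Both trajectories then advance by the trivial transverse strip vector field until $\tau = \rxsymbol$, and the continuous map $\rx{R_e}$ is applied, producing a post-reset discrepancy of order $\bigo{\stepsymbol}$ via uniform continuity of $\rx{R_e}$ on the compact strip.

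Accumulating these bounds over the $N$ transitions gives $\rx{\rho_{[0,T]}}(\rx{x},\dapprox{z}) \leq C_2 N e^{LT}(\stepsymbol^\omega + \stepsymbol)$, which tends to $0$ as $\stepsymbol \to 0$ for $\rxsymbol$ fixed. The stated $C\rxsymbol$ bound then accommodates the worst-case residual offset when the relaxed execution $\rx{x}$ is non-unique, as at intersections of guards: distinct relaxations traversing different strips of width $\rxsymbol$ differ by at most $\bigo{\rxsymbol}$ in $\indmetric{\rx{\cal M}}$, and $\dapprox{z}$ need not converge to the specific $\rx{x}$ against which it is compared. The main obstacle will be the careful timing bookkeeping near each transition: for times where $\rx{x}$ and $\dapprox{z}$ lie in different pieces of the relaxed quotient (one in $D_j$, the other on the strip, or one already past the reset), evaluating $\indmetric{\rx{\cal M}}$ across these mismatched locations requires exploiting the triviality of the strip vector field to keep the mismatch from compounding when passed through the reset map.
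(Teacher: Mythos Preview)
Your decomposition into flow segments and strip traversals, followed by induction over transitions, is exactly the skeleton of the paper's proof. Where you diverge is in the origin of the $C\rxsymbol$ residual.

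The paper does \emph{not} obtain $C\rxsymbol$ from non--uniqueness of $\rx{x}$ at guard intersections. It argues under the simplifying assumption of a single transition through a single guard $G_{(j,j')}$, introduces four reference times $\sigma_m \leq \sigma_M \leq \nu_m \leq \nu_M$ (the min/max of the strip entry and exit times for $\rx{x}$ and $\dapprox{z}$), and does a case analysis. On $[0,\sigma_m)$ integrator convergence gives $\bigo{\stepsymbol^\omega}$; on $[\sigma_m,\sigma_M)$ and $[\nu_m,\nu_M)$ the Lipschitz flow bound from~\eqref{eq:proof_thm27_xlipschitz} gives $\bigo{\stepsymbol^\omega}$; but on $[\sigma_M,\nu_m)$, when both executions sit on the strip, the paper simply writes
\[
\indmetric{\rx{\cal M}}\parenb{\rx{x}(\nu_m),\dapprox{z}(\nu_m)} \leq \indmetric{\rx{\cal M}}\parenb{\rx{x}(\sigma_M),\dapprox{z}(\sigma_M)} + 2\rxsymbol,
\]
the $2\rxsymbol$ being the trivial diameter contribution of the strip's transverse extent. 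This term does not vanish as $\stepsymbol\to 0$, and accumulating it over the finite number of transitions is the entire source of the stated residual.

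Your sharper program---driving the limit to $0$ in the unique case and reserving $C\rxsymbol$ for non--uniqueness---is plausible but strictly harder than what the theorem requires. It forces you to show that the strip coordinate $q$ assigned to the overshooting step converges to the true guard crossing point, which depends on a coordinatization of the integrator's output in $\R^{n_j}\setminus D_j$ that the paper never makes explicit; and your ``post--reset discrepancy of order $\bigo{\stepsymbol}$ via uniform continuity'' overstates what mere continuity of $\rx{R_e}$ buys (you get $o(1)$, not a rate). The paper's crude $2\rxsymbol$ bound sidesteps both issues entirely.
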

\begin{proof}
  As we have done with the previous proofs, we only provide a sketch of the argument in the interest of brevity.
  Assume that $\rx{x}$ performs a single discrete transition in the interval $[0,T]$ for each $\rxsymbol$ small enough, crossing the guard $G_{(j,j')}$ at time $\rx{\tau}$.
  Let $\delta > 0$.
  Since $x$ is orbitally stable and $\dalgo{\cal A}$ is convergent with order $\omega$, for $\stepsymbol$ small enough there exists an initial condition $\dapprox{z}(0)$ such that $\absb{\rx{x}(0) - \dapprox{z}(0)} < \delta$ and $\dapprox{z}$ crosses the guard $G_{(j,j')}$ at time $\dapprox{\tau}_{k'} \in [t_{k'},t_{k'+1})$ for some $k' \in \N$, where $\set{t_k}_{k=0}^N$ is the set of time samples associated to $\dapprox{z}$.
  Moreover, we can choose $\stepsymbol$ small enough such that $\absb{\rx{\tau} - t_{k'+1}} \leq 2\delta + \bigo{\stepsymbol^\omega}$ and $\absb{t_{k'+2} - \rx{\tau}+\rxsymbol} = \bigo{\stepsymbol^\omega}$.

  Let $\sigma_m = \min\setb{t_{k'+1}, \rx{\tau}}$, $\sigma_M = \max\setb{t_{k'+1}, \rx{\tau}}$, $\nu_m = \min\setb{t_{k'+2}, \rx{\tau} + \rxsymbol}$, and $\nu_M = \max \setb{t_{k'+2}, \rx{\tau} + \rxsymbol}$.
  Also, let us assume that $h$ is small enough such that $\sigma_M \leq \nu_m$.
  Then on the interval $[0,\sigma_m)$ we get convergence due to $\dalgo{\cal A}$.
  On the interval $[\sigma_m,\sigma_M)$ one execution has transitioned into a strip, while the other is still governed by the vector field on $D_j$.
  On the interval $[\sigma_M,\omega_m)$ both executions are inside the strip, and on the interval $[\omega_m,\omega_M)$ one execution has transitioned to a new domain, while the second is still on the strip.
  After time $\omega_M$ both executions are in a new domain, and we can repeat the process.

  Consider the following cases:
  \begin{enumparen}
  \item By the convergence of algorithm $\dalgo{\cal A}$,
    \begin{equation}
      \indmetric{\rx{\cal M}}\parenb{\rx{x}(\sigma_m),\dapprox{z}(\sigma_m)} = \bigo{\delta} + \bigo{\stepsymbol^\omega}.
    \end{equation}
  \item \label{thm:proof_thm28_case2}
    Using~\eqref{eq:proof_thm27_xlipschitz} from the proof of Theorem~\ref{thm:relaxed_execution_convergence},
    \begin{multline}
      \indmetric{\rx{\cal M}}\parenb{\rx{x}(\sigma_M),\dapprox{z}(\sigma_M)} \leq
      \indmetric{\rx{\cal M}}\parenb{\rx{x}(\sigma_M),\rx{x}(\sigma_m)}
      + \indmetric{\rx{\cal M}}\parenb{\rx{x}(\sigma_m),\dapprox{z}(\sigma_m)} +\\
      + \indmetric{\rx{\cal M}}\parenb{\dapprox{z}(\sigma_m),\dapprox{z}(\sigma_M)}
      = \bigo{\delta} + \bigo{\stepsymbol^\omega}.
    \end{multline}
  \item Using the same argument as in the inequality above,
    \begin{equation}
      \hspace{-.9em}\indmetric{\rx{\cal M}}\parenb{\rx{x}(\nu_m),\dapprox{z}(\nu_m)}
      \leq
      \indmetric{\rx{\cal M}}\parenb{\rx{x}(\sigma_M),\dapprox{z}(\sigma_M)} + 2 \rxsymbol.
    \end{equation}
  \item Finally, again using the same argument as in case~\ref{thm:proof_thm28_case2},
    \begin{equation}
      \label{eq:thm27_ineq4}
      \indmetric{\rx{\cal M}}\parenb{\rx{x}(\nu_M),\dapprox{z}(\nu_M)}
      \leq
      \indmetric{\rx{\cal M}}\parenb{\rx{x}(\nu_m),\dapprox{z}(\nu_m)}
      + \bigo{\stepsymbol^\omega}.
    \end{equation}
  \end{enumparen}
  \noindent The generalization to any relaxed execution defined on $\rx{\cal M}$ and its discrete approximation follows by noting that they perform a finite number of discrete jumps on any bounded interval and that $\delta$ can be chosen arbitrarily small.
\end{proof}

Next, we state the main result of this Section, which is a result of Theorems~\ref{thm:relaxed_execution_convergence} and~\ref{thm:discrete_approx_conv}.
\begin{corollary}
  \label{cor:simulation}
  Let ${\cal H}$ be a hybrid dynamical system and $\rx{\cal H}$ be its relaxation.
  Let $p \in {\cal M}$, $u \in BV(\R,U)$, $x\colon I \to \rx{\cal M}$ be an execution of ${\cal H}$ with initial condition $p$ and control $u$, $\rx{x}\colon \rx{I} \to \rx{\cal M}$ be its corresponding relaxed execution, and $\dapprox{z}\colon \dapprox{I} \to \rx{\cal M}$ be its corresponding discrete approximation.
  If the following conditions are satisfied:
  \begin{enumparen}
    \item $x$ has a finite number of mode transitions or is a Zeno execution that accumulates;
    \item $x$ is orbitally stable; and,
    \item $[0,T] \subset I \cap \rx{I} \cap \dapprox{I}$ for each $\rxsymbol$ and $\stepsymbol$ small enough,
  \end{enumparen}
  then $\lim_{\substack{\rxsymbol \to 0\\ \stepsymbol \to 0}} \rx{\rho_{[0,T]}}\parenb{x,\dapprox{z}} = 0$.

  Moreover, the rate of convergence in the $\rx{\rho_{[0,T]}}$--metric is $\bigo{\epsilon} + \bigo{h^\omega}$.
\end{corollary}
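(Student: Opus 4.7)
The plan is to prove the corollary by a triangle-inequality decomposition using the pseudometric $\rx{\rho_{[0,T]}}$. Concretely, for each choice of $\rxsymbol > 0$ and $\stepsymbol > 0$ (small enough that Condition~(3) holds), I would write
\begin{equation*}
\rx{\rho_{[0,T]}}\parenb{x,\dapprox{z}}
\leq \rx{\rho_{[0,T]}}\parenb{x,\rx{x}} + \rx{\rho_{[0,T]}}\parenb{\rx{x},\dapprox{z}},
\end{equation*}
which is valid because $\rx{\rho_{[0,T]}}$ inherits the triangle inequality from $\indmetric{\rx{\cal M}}$ via the supremum over $t \in [0,T]$. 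This reduces the joint convergence to controlling the two pieces separately.

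Next I would invoke Theorem~\ref{thm:relaxed_execution_convergence} to handle the first term: its hypotheses are precisely Conditions~(1), (2), and (3) of the corollary, so $\rx{\rho_{[0,T]}}(x,\rx{x}) \to 0$ as $\rxsymbol \to 0$. For the second term, Theorem~\ref{thm:discrete_approx_conv} applies (again using orbital stability and the interval containment), yielding a constant $C > 0$ with $\lim_{\stepsymbol \to 0} \rx{\rho_{[0,T]}}(\rx{x},\dapprox{z}) \leq C\rxsymbol$. Given any $\delta > 0$, I would first pick $\rxsymbol_0$ small enough that $\rx{\rho_{[0,T]}}(x,\rx{x}) < \delta/3$ and $C\rxsymbol_0 < \delta/3$, then pick $\stepsymbol_0 = \stepsymbol_0(\rxsymbol)$ small enough that $\rx{\rho_{[0,T]}}(\rx{x},\dapprox{z}) < C\rxsymbol + \delta/3 < 2\delta/3$. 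Combining the two bounds gives $\rx{\rho_{[0,T]}}(x,\dapprox{z}) < \delta$.

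The main obstacle is the interaction between the two limit parameters: the joint limit notation $\lim_{\rxsymbol \to 0,\,\stepsymbol \to 0}$ demands some care, because the threshold $\stepsymbol_0$ returned by Theorem~\ref{thm:discrete_approx_conv} in principle depends on $\rxsymbol$ (for instance, through how finely the numerical integrator must resolve the flow near a guard so that $\dapprox{z}$ enters the strip $\rx{S_e}$). I would address this either by interpreting the statement as an iterated limit, taking $\stepsymbol \to 0$ first for each fixed $\rxsymbol$ and then $\rxsymbol \to 0$, or by arguing that the constants in Theorem~\ref{thm:discrete_approx_conv} (Lipschitz bounds on $\rx{f_j}$, the order $\omega$ of the integrator, the bound $\totvar{u}$, and the finite number of discrete transitions guaranteed by orbital stability) can be chosen uniformly for $\rxsymbol$ in a neighborhood of $0$. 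In either reading, the bound $\delta$ can be made arbitrarily small, so the joint limit is zero as claimed.
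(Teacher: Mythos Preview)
Your proposal is correct and follows essentially the same approach as the paper: both arguments split $\rx{\rho_{[0,T]}}(x,\dapprox{z})$ via the triangle inequality into the two pieces $\rx{\rho_{[0,T]}}(x,\rx{x})$ and $\rx{\rho_{[0,T]}}(\rx{x},\dapprox{z})$, then invoke Theorems~\ref{thm:relaxed_execution_convergence} and~\ref{thm:discrete_approx_conv} respectively. The only difference is in the handling of the joint limit: the paper packages the interchange--of--limits step by appealing to a uniform--convergence criterion (Theorem~7.9 in Rudin), asserting that it suffices to check $\lim_{\stepsymbol \to 0}\limsup_{\rxsymbol \to 0}\rx{\rho_{[0,T]}}(\rx{x},\dapprox{z}) = 0$, which follows directly from Theorem~\ref{thm:discrete_approx_conv}; you instead give an explicit $\delta/3$ argument and flag the possible $\rxsymbol$--dependence of the threshold $\stepsymbol_0$, which is arguably more transparent about where the difficulty lies.
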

\begin{proof}
  Note that, by Theorem~\ref{thm:relaxed_execution_convergence} together with the Triangle Inequality, this corollary is equivalent to proving that $\rx{\rho_I}\parenb{\rx{x},\dapprox{z}} \to 0$ as both $\rxsymbol,\stepsymbol \to 0$.
  Hence we show that $\rx{\rho_I}\parenb{\rx{x},\dapprox{z}}$ converges uniformly on $\stepsymbol$ as $\rxsymbol \to 0$.
  Using an argument similar to the one in the proof of Theorem~7.9 in~\cite{Rudin1964}, proving the uniform convergence on $h$ is equivalent to showing that
  $\lim_{\stepsymbol \to 0} \limsup_{\rxsymbol \to 0} \rx{\rho_I}\parenb{\rx{x},\dapprox{z}} = 0$,
  but this is true by Theorem~\ref{thm:discrete_approx_conv}, as desired.

  The rate of convergence follows from the proofs of Theorems~\ref{thm:relaxed_execution_convergence} and~\ref{thm:discrete_approx_conv}, in particular from inequalities~\eqref{eq:thm26_ineq1} to~\eqref{eq:thm27_ineq4}.
\end{proof}

\section{Examples}
\label{sec:examples}

We apply our results in three illustrative examples:
first
detailing the technical advantages of our intrinsic state--space metric over trajectory--space metrics in Section~\ref{sec:metric};
subsequently
comparing the performance of our provably--convergent simulation algorithm to the state--of--the--art in Section~\ref{sec:flo};
and finally
applying our metric and simulation algorithm to a novel legged locomotion model in Section~\ref{sec:loc}.
Each example produces executions that are orbitally stable with respect to our state--space metric; this follows from~\cite[Theorem~2.8.3]{Filippov1988} for the examples in Sections~\ref{sec:metric} and~\ref{sec:loc} and~\cite[Theorem~5.1]{Schatzman1998} for the example in Section~\ref{sec:flo}.

\subsection{Metrization Example: Digital Control System}
\label{sec:metric}

We now study the distance between executions in the digital control system of Fig.~\ref{fig:motivation} using existing trajectory--space metrics and our proposed state--space metric.
Consider a nominal execution $x\colon [0,T] \into D$ that crosses the two thresholds simultaneously.
For each $\delta > 0$ let $y_\delta\colon [0,T] \into D$ be the execution initialized at
$y_\delta(0) = x(0) + (-\delta, 0)$
and let $z_\delta\colon [0,T] \into D$ be the execution initialized at
$z_\delta(0) = x(0) + (0, -\delta)$;
see Fig.~\ref{fig:dcs} for an illustration.
For each $\delta > 0$ the executions $y_\delta$ and $z_\delta$ undergo different sequences of logical controller states, $0 \goesto 1 \goesto 3$ or $0 \goesto 2 \goesto 3$, corresponding to transitions through different discrete modes in the controlled hybrid system in Fig.~\ref{fig:chs}.
In existing trajectory--space metrics~\cite{Tavernini1987, Gokhman2008, Tavernini2009, SanfeliceTeel2010}, $y_\delta$ and $z_\delta$ would be separated by at least unit distance.
In the state--space metric we develop in Section~\ref{sec:relaxation}, the distance between $y_\delta$ and $z_\delta$ in the controlled hybrid system of Fig.~\ref{fig:chs} is equal to that between the trajectories of the discontinuous vector field in Fig.~\ref{fig:dcs}, and in particular converges to zero as $\delta\goesto 0$.

An important consequence of this discussion is that $x$ is orbitally stable with respect to our state--space metric, but not with respect to existing trajectory--space metrics.
Therefore the algorithm described in Section~\ref{sec:simulation} is at present the only algorithm that yields simulations that provably converge to $x$.

\subsection{Simulation Example: Forced Linear Oscillator with Stop}
\label{sec:flo}

\begin{figure*}[tp]
  \centering
  \begin{minipage}{\textwidth}
    \subfloat%
    [Forced linear oscillator with stop.\label{fig:flo}]%
    {%
      \resizebox{.32\textwidth}{!}{\input{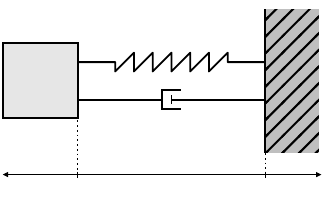tex_t}}%
    }%
    \hfill%
    \subfloat%
    [Position of the analytical solution of Example~1 in Table~\ref{tab:msparams} (solid line), and position of the stop (dotted line).\label{fig:gtex1}]%
    {%
      \includegraphics[width=.32\textwidth,keepaspectratio=true]{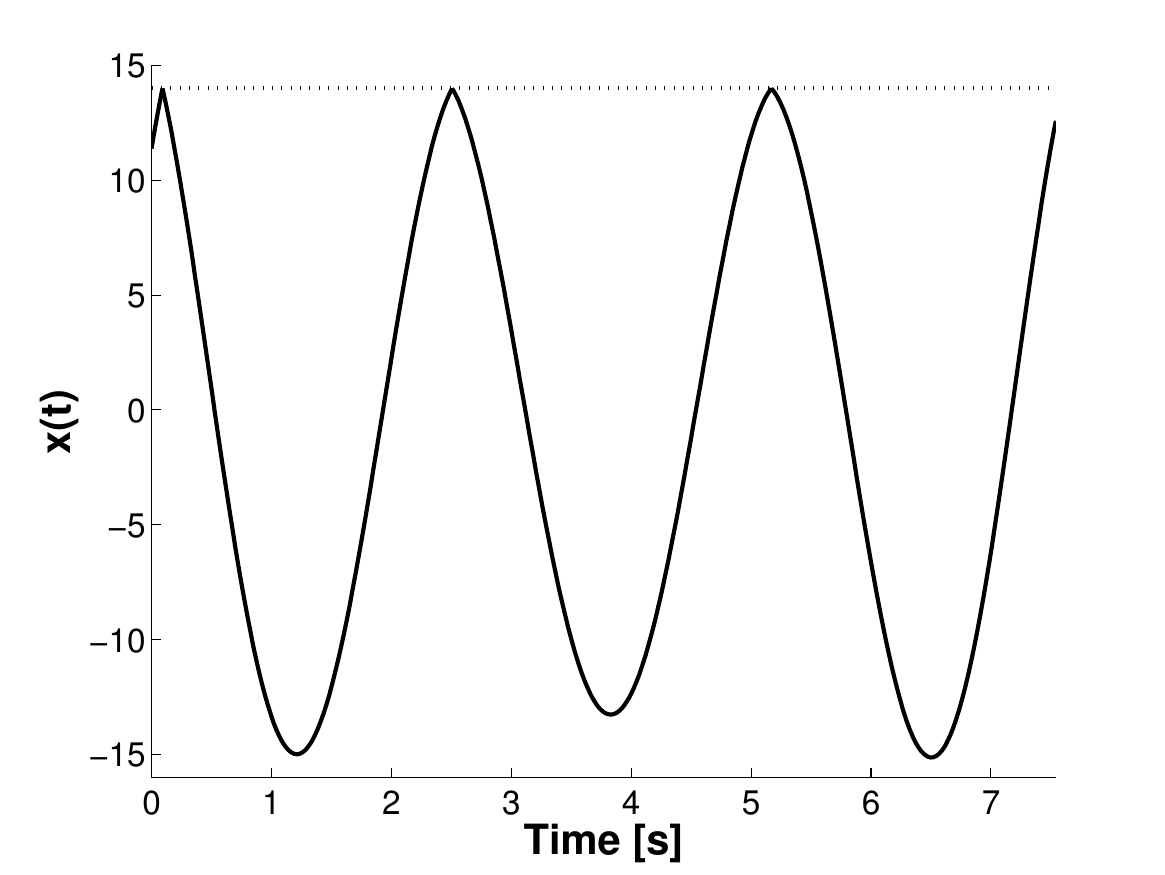}%
    }%
    \hfill%
    \subfloat%
    [Position of the analytical solution of Example~2 in Table~\ref{tab:msparams} (solid line), and position of the stop (dotted line).\label{fig:gtex2}]%
    {%
      \includegraphics[width=.32\textwidth,keepaspectratio=true]{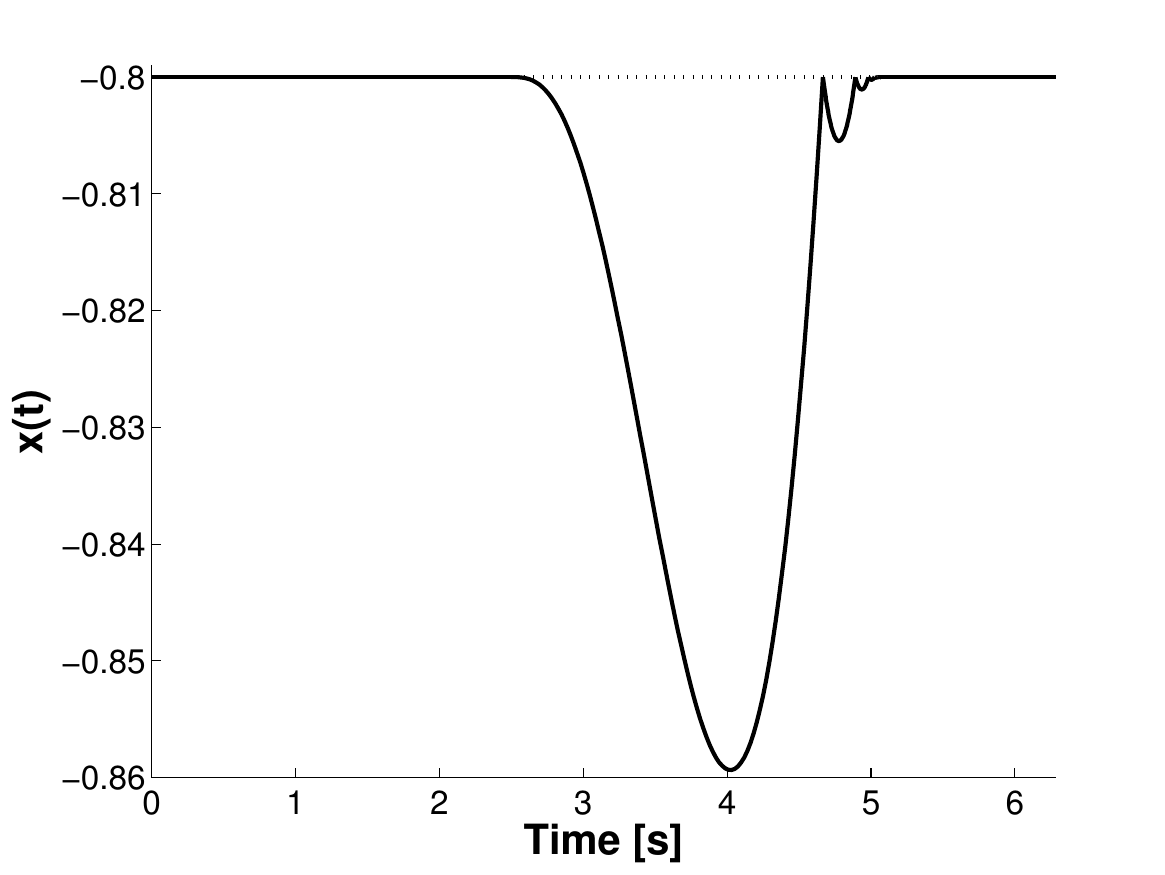}%
    }\\
    \subfloat%
    [$\rx{\rho_{[0,t_{\text{max}}]}}$--error of the algorithm in Fig.~\ref{fig:algo_discrete_approx} for the examples in Table~\ref{tab:msparams}.\label{fig:florho}]%
    {%
      \includegraphics[width=.32\textwidth,keepaspectratio=true]{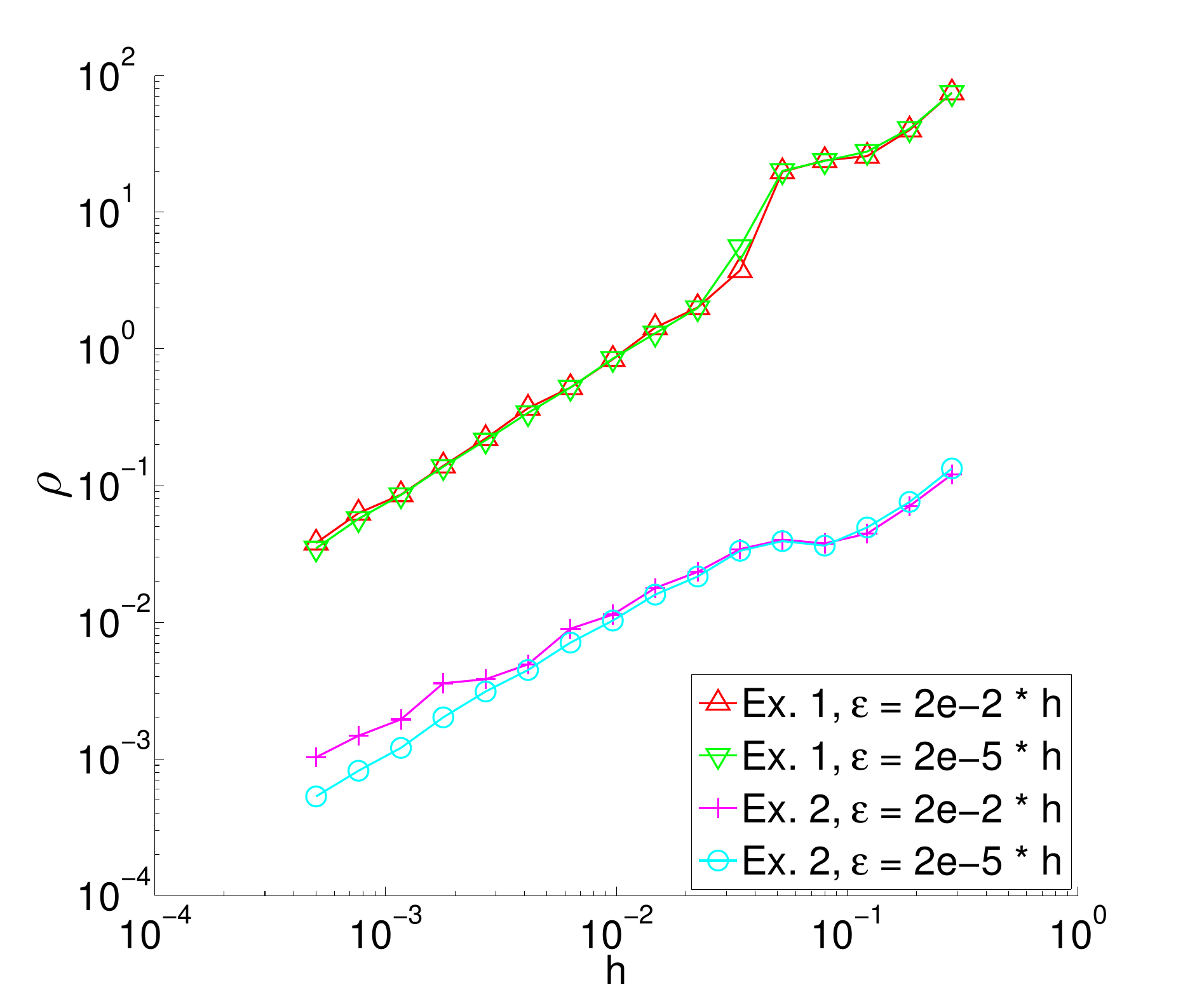}%
    }%
    \hfill%
    \subfloat%
    [$\hat{\rho}$--error of the algorithm in Fig.~\ref{fig:algo_discrete_approx} vs.\ the PS Method for the examples in Table~\ref{tab:msparams}.\label{fig:flomax}]%
    {%
      \includegraphics[width=.32\textwidth,keepaspectratio=true]{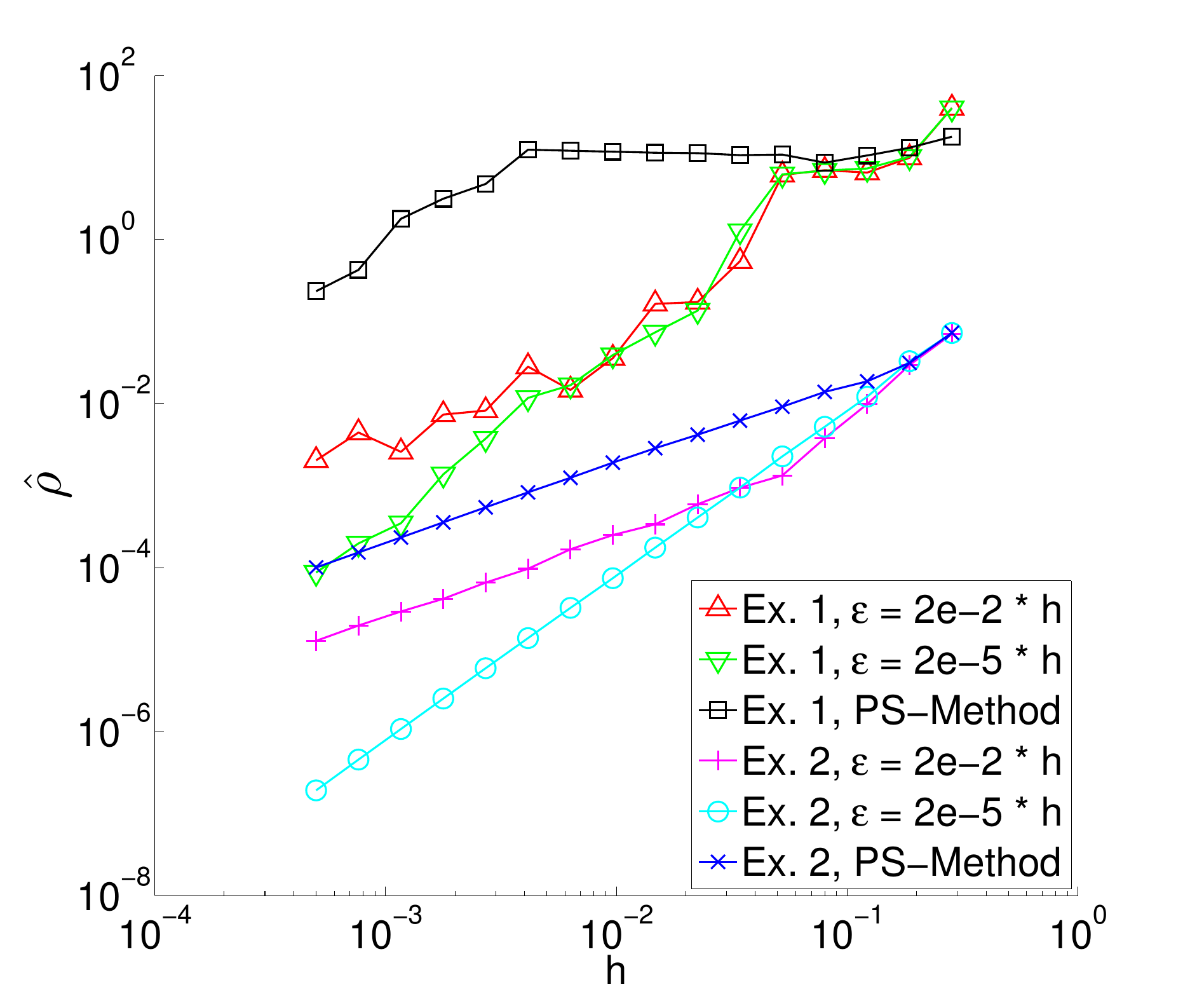}%
    }%
    \hfill%
    \subfloat[Computation times of the algorithm in Fig.~\ref{fig:algo_discrete_approx} vs.\ the PS Method for the examples in Table~\ref{tab:msparams}.\label{fig:flocomp}]%
    {%
      \includegraphics[width=.32\textwidth,keepaspectratio=true]{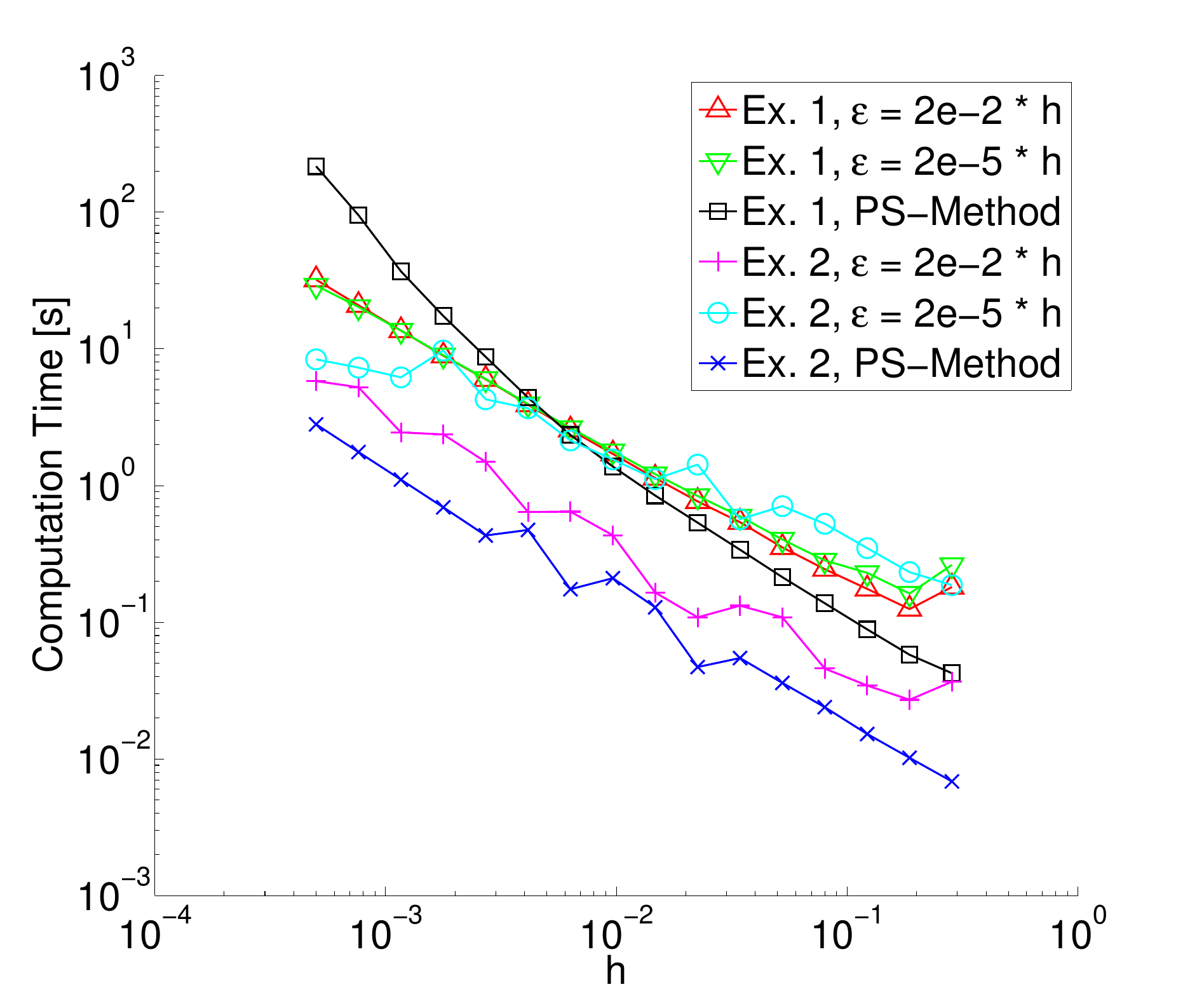}%
    }%
    \caption{A mechanical system (Fig.~\ref{fig:flo}) and a pair of examples (Figs.~\ref{fig:gtex1} and~\ref{fig:gtex2}) chosen to illustrate the accuracy of the algorithm in Fig.~\ref{fig:algo_discrete_approx} vs.\ the PS Method (Figs.~\ref{fig:florho} and~\ref{fig:flomax}) and their computation times (Fig.~\ref{fig:flocomp}).}%
  \end{minipage}%
\end{figure*}

We consider a single degree--of--freedom oscillator consisting of a mass that is externally forced and can impact a plane fixed rigid stop, as in Fig.~\ref{fig:flo}.
The state of the oscillator is the position, $x(t) \in \R$, and velocity, $\dot{x}(t) \in \R$, of the mass.
The oscillator is forced with a control $u \in BV(\R,\R)$.
The oscillator is modeled as a controlled hybrid system with a single mode $D = \set{\parenb{x(t),\dot{x}(t)} \in \R^2 \mid x(t) \leq x_{\text{max}}}$, and single guard corresponding to the mass impacting the stop with non--negative velocity $G = \setb{\parenb{x(t),\dot{x}(t)} \in \R^2 \mid x(t) = x_{\text{max}}, \dot{x}(t) \geq 0}$.
Upon impact, the state is updated using the reset map $R(x,\dot{x}) = \parenb{x,-c\, \dot{x}}$, where $c \in [0,1]$ is the coefficient of restitution.
Within the single domain, the dynamics of the system are governed by $\ddot{x}(t) + 2 a \dot{x}(t) + \omega^2x(t) = m^{-1}\, u(t)$,
where $\omega = \sqrt{m^{-1} k}$, $a = 0.5\, m^{-1}\, \mu$, $k$ is the spring constant, and $\mu$ is the damping coefficient.


Given an initial condition $\parenb{x(t_0),\dot{x}(t_0)} = \parenb{x_0,\dot{x}_0} \in D$, the oscillator's motion is analytically determined by:
\begin{equation}
  x(t) = e^{-at}\parenb{A_n \cos(\tilde{\omega} t) + B_n \sin(\tilde{\omega} t)}
  + \tilde{\omega}^{-1}\int_0^t u(s) e^{-a(t - s)}\sin\parenb{\tilde{\omega}(t - s)}\diff{s}
\end{equation}
for each $t \in [t_{n-1},t_n)$, where $\tilde{\omega} = \sqrt{\omega^2 - a^2}$ (assuming that the damping is sub--critical), with $t_n$ such that $x(t_n^-) = x_{\text{max}}$ for each $n \in \N$, and $A_n$ and $B_n$ are determined by the given initial conditions when $n = 0$, or those determined by applying the reset map to $x(t_n^-)$ when $n \geq 1$.
Note that determining the impact times can be done analytically.
The analytical solution holds provided that the mass does not stick to the stop, since in that case the dynamics are given by $\ddot{x}(t) + 2a\dot{x}(t) + \omega^2 x(t) = m^{-1}\, \parenb{u(t) + \lambda(t)}$, where $\lambda(t) \in \R$ denotes the force generated by the stop to prevent movement.
This equation holds as long as $x(t) = x_{\text{max}}$, $\dot{x}(t) = \ddot{x}(t) = 0$, and the reaction of the stop is negative, i.e.\ $\lambda(t) \geq m\, \omega^2\, x_{\text{max}}$.
For the contact to cease, $\lambda(t) - m\, \omega^2\, x_{\text{max}}$ must become zero and change sign.
Once this happens, the analytical solution can be used again to construct the motion of the mass with the initial condition $(x_{\text{max}},0)$.


Assuming that the forcing $u$ is continuous (an assumption that is violated by many control schemes such as ones generated via optimal control) a convergent numerical simulation scheme, which we call the PS Method, to determine the position of a mechanical system with unilateral constraints was proposed in~\cite{PaoliSchatzman2003ii}.
Given a step--size $\stepsymbol > 0$ and $t_k = t_0 + h\,k$ for each $k \in \N$, their approach is a two--step method that computes a set of positions, $z_{\text{PS}}\colon \set{t_k}_{k \in \N} \to \R$, by $z_{\text{PS}}(t_0) = x_0$ and:
\begin{equation}
  \begin{aligned}
    z_{\text{PS}}(t_1) &= x_0 + \dot{x}_0 \stepsymbol + \frac{h^2}{2}\parenb{u(0) - 2 a \dot{x}_0 - \omega^2 x_0},\\
    z_{\text{PS}}(t_{k+1}) &= - c\, z_{\text{PS}}(t_{k-1}) + \min\setb{y_{\text{PS}}(t_{k}), (1+c)x_{\text{max}}},\\
    y_{\text{PS}}(t_{k}) &= \frac{1}{1 + a \stepsymbol} \Bigl(h^2 u(t_{k}) + (2 - \stepsymbol^2\omega^2) z_{\text{PS}}(t_{k}) - \parenb{(1-c) - (1+c)a\,h} z_{\text{PS}}(t_{k-1})\Bigr).
  \end{aligned}
\end{equation}

\begin{table}[t]
  \centering
  \caption{Parameters used for the simulations of the forced linear oscillator with stop.}
  \label{tab:msparams}
  \begin{tabular}{c!{\vrule width 1.5pt}c|c|c|c|c|c|c|c}
    & $a$ & $c$ & $t_{\text{max}}$ & $u(t)$ & $x_0$ & $\dot{x}_0$ & $x_{\text{max}}$ & $\omega$ \\
    \noalign{\hrule height 1.5pt}
    Example $1$ & $0.05$ & $0.9$ & $40\pi$ & $20 \cos(\frac{5}{2}t)$ & $11.36$ & $31.4$ & $14$ & $2.5$ \\
    \hline
    Example $2$ & $0.95$ & $0.5$ & $4\pi$ & $ \cos(t)$ & $-0.8$ & $0$ & $-0.8$ & $1$ \\
  \end{tabular}
\end{table}

We illustrate the performance of our approach by considering the two examples described in Table~\ref{tab:msparams} whose solutions, which are defined for all $t \in [0,t_{\text{max}}]$, can be computed analytically.
The position component of the analytical trajectory of each example is plotted in Figs.~\ref{fig:gtex1} and~\ref{fig:gtex2}.
The evaluation of the performance of our algorithm as described in Fig.~\ref{fig:algo_discrete_approx} using $\rx{\rho}$, as in Definition~\ref{def:curve_metric}, is shown in Fig.~\ref{fig:florho}.
To make our approach comparable to the PS Method, for $\dalgo{{\cal A}}$ we use a Runge--Kutta of order two which is called the midpoint method.
We cannot use $\rx{\rho}$ to compare our discrete approximation algorithm to the PS method since the PS method does not compute the velocities of the hybrid system.
Hence, we use the evaluation metric proposed in~\cite{Janin2001} which compares a numerically simulated position trajectory, $z_{\text{pos}}\colon \set{t_k}_{k \in \N} \to \R$, to the analytically computed position trajectory, $x_{\text{analytic}}\colon [0,t_{\text{max}}] \to \R$, at the sample points $\set{t_k}_{k \in \N} \cap [0,t_{\text{max}}]$ as follows:
\begin{equation}
  \label{eq:rhohat}
  \hat{\rho}(z_{\text{pos}},x_{\text{analytic}})
  = \max\setb{\abs{z_{\text{pos}}(t_k) - x_{\text{analytic}}(t_k)} \bigm|
    \set{t_k}_{k \in \N} \cap [0,t_{\text{max}}]}.
\end{equation}
The result of this comparison is illustrated in Fig.~\ref{fig:flomax}.
Finally, the computation time on a 32~GB, 3.1~GHz \emph{Xeon} processor computer for each of the examples as a function of the step--size and relaxation parameter is shown in Fig.~\ref{fig:flocomp}.
Notice in particular that we are able to achieve higher accuracy with respect to the $\hat{\rho}$ evaluation metric at much faster speeds.
In Example~1, for step--sizes $\stepsymbol \leq 10^{-1}$, our numerical simulation method is consistently more accurate by several orders of magnitude and generally several orders of magnitude faster than the PS method.
In Example~2, using a step--size of approximately $h = 10^{-2}$ and relaxation parameter $\rxsymbol = 2 \cdot 10^{-7}$, our numerical simulation achieves a $\hat{\rho}$ value of approximately $10^{-4}$ while taking approximately $0.1$~seconds, whereas the PS method requires a step--size of $\stepsymbol = 5 \cdot 10^{-4}$ which takes approximately $5$~seconds in order to achieve the same level of accuracy.

\subsection{Simultaneous Transitions in Models of Legged Locomotion}
\label{sec:loc}

As a terrestrial agent traverses an environment, its appendages intermittently contact the terrain.
Since the equations governing the agent's motion change with each limb contact, the dynamics are naturally modeled by a controlled hybrid system with discrete modes corresponding to distinct contact configurations.
Because the dynamics of dexterous manipulation are equivalent to that of legged locomotion~\cite{JohnsonKoditschek2013}, such controlled hybrid systems model a broad and important class of dynamic interactions between an agent and its environment.

Legged animals commonly utilize gaits that, on average, involve the simultaneous transition of multiple limbs from aerial motion to ground contact~\cite{Alexander1984,GolubitskyStewart1999}.
Similarly, many multi--legged robots enforce simultaneous leg touchdown via virtual constraints implemented algorithmically~\cite{RaibertChepponis1986,SaranliBuehler2001} or physical constraints implemented kinematically~\cite{KimClark2006,HooverBurden2010}.
Trajectories modeling such gaits pass through the intersection of multiple transition surfaces in the corresponding controlled hybrid system models.
Therefore simulation of this frequently--observed behavior requires a numerical integration scheme that can accommodate overlapping guards.
The algorithm in Fig.~\ref{fig:algo_discrete_approx} has this capability, and to the best of our knowledge is the only existing algorithm possessing this property.
We demonstrate this advanced capability using a \emph{pronking} gait in a saggital--plane locomotion model. 

Fig.~\ref{fig:pronk_sch} contains an extension of the ``Passive RHex--runner'' in~\cite{SeipelHolmes2006} that allows pitching motion.
A rigid body with mass $m$ and moment--of--inertia $I$ moves in the saggital plane under the influence of gravity $g$.
Linear leg--springs are attached to the body via a frictionless pin joint located symmetrically at distance $d/2$ from the center--of--mass.
The leg--springs are massless with linear stiffness $k$, rest length $\ell$, and make an angle $\psi$ with respect to the body while in the air.
When a foot touches the ground it attaches via a frictionless pin joint, and it detaches when the leg extends to its rest length.

\begin{figure}[t]
  \centering
  \includegraphics[width=.35\linewidth,keepaspectratio=true]{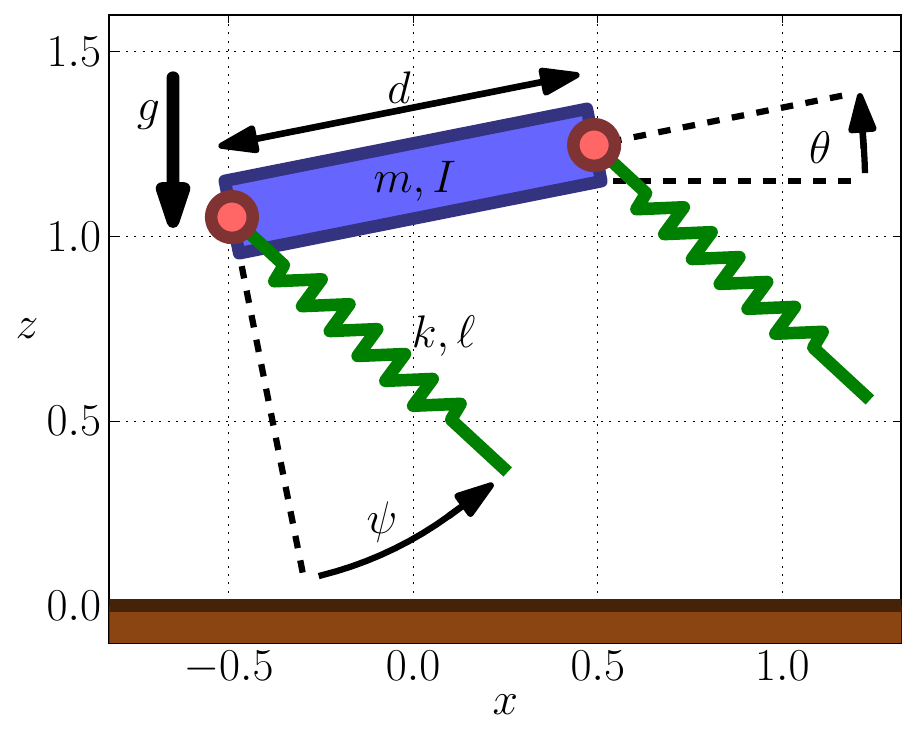}%
  \caption{Schematic for the saggital--plane locomotion model with three mechanical degrees of freedom.}%
  \label{fig:pronk_sch}
\end{figure}
\begin{figure}[t]
  \centering
  \includegraphics[width=.35\linewidth,keepaspectratio=true]{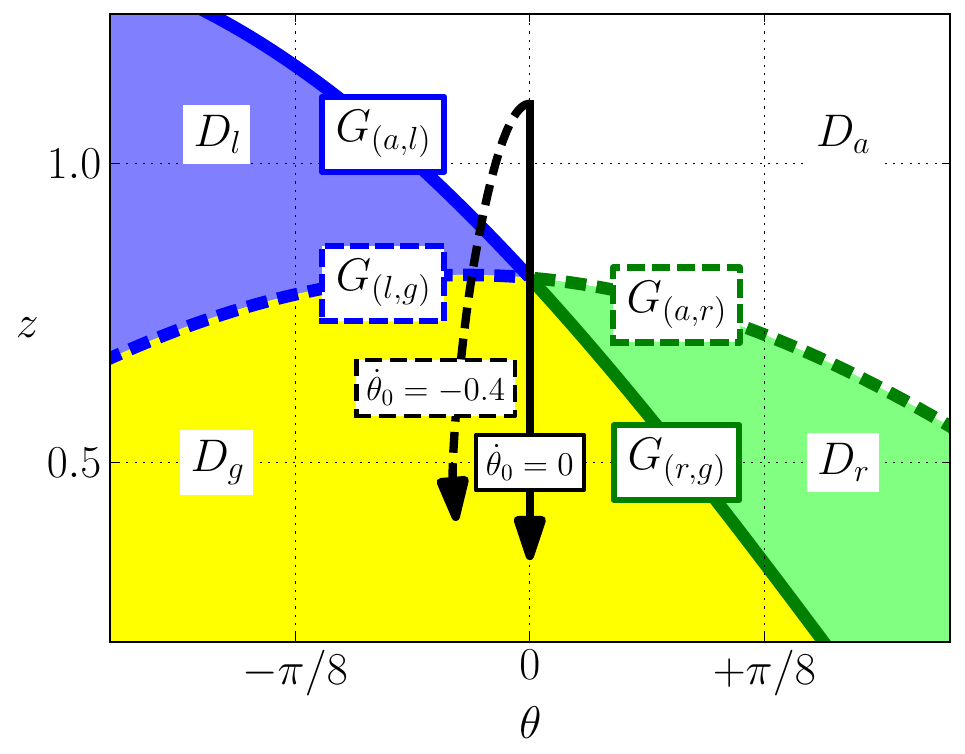}%
  \caption{Projection of guards in $(\theta,z)$ coordinates for transition from aerial domain $D_a$ to ground domain $D_g$ with parameters $d = \ell = 1$, $\psi = \pi/5$.}%
  \label{fig:pronk_grd}
\end{figure}


A \emph{pronk} is a gait wherein all legs touch down and lift off from the ground at the same time~\cite{Alexander1984,GolubitskyStewart1999}.
Due to symmetries in our model, motion with pitch angle $\theta = 0$ for all time is invariant. 
Therefore periodic orbits for the \emph{spring--loaded inverted pendulum} model in~\cite{GhigliazzaAltendorfer2003} correspond exactly to pronking gaits for our model.
Fig.~\ref{fig:pronk_grd} contains a projection of the guards $G_{(a,l)}$, $G_{(a,r)}$, $G_{(l,g)}$, $G_{(r,g)}$ in $(\theta,z)$ coordinates for the transition from the aerial domain $D_a$ to the ground domain $D_g$ through left stance $D_l$ and right stance $D_r$.
The pronking trajectory is illustrated by a downward--pointing vertical arrow, and a nearby trajectory initialized with negative rotational velocity is illustrated by a dashed line.
Fig.~\ref{fig:pronk_seq} contains snapshots from these simulations.

\begin{figure*}[tp]
  \centering
  \mbox{}%
  \hfill%
  \includegraphics[width=.45\linewidth]{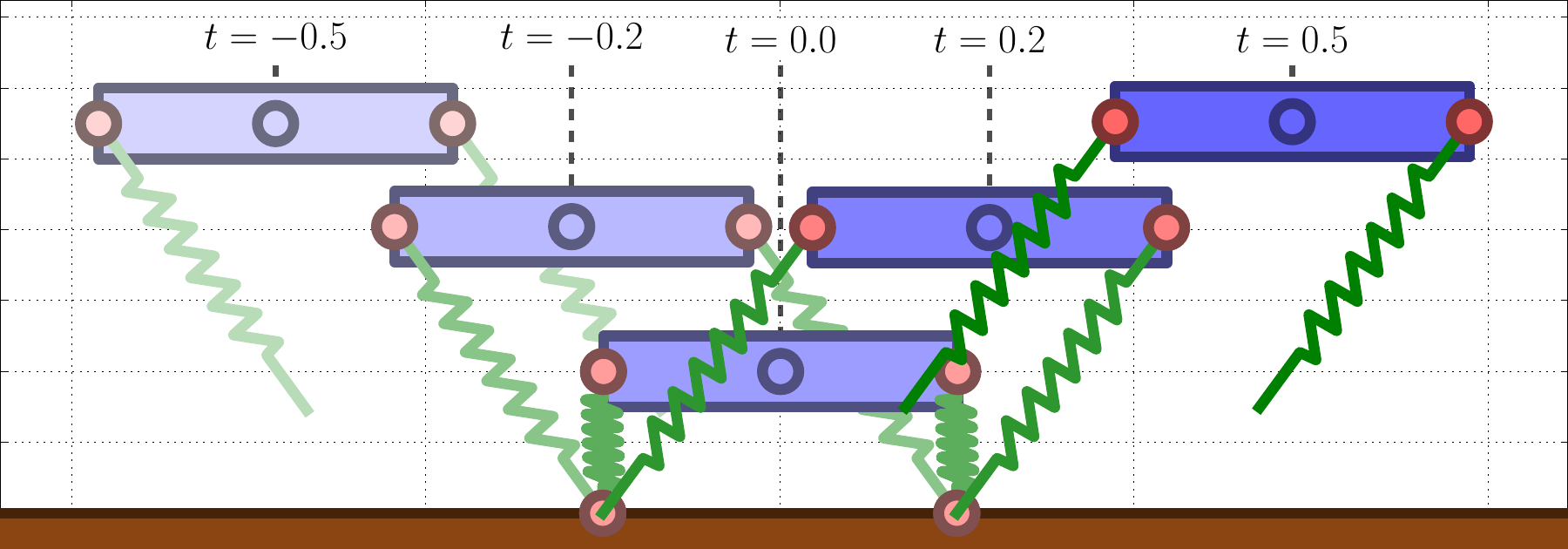}%
  \hfill%
  \includegraphics[width=.45\linewidth]{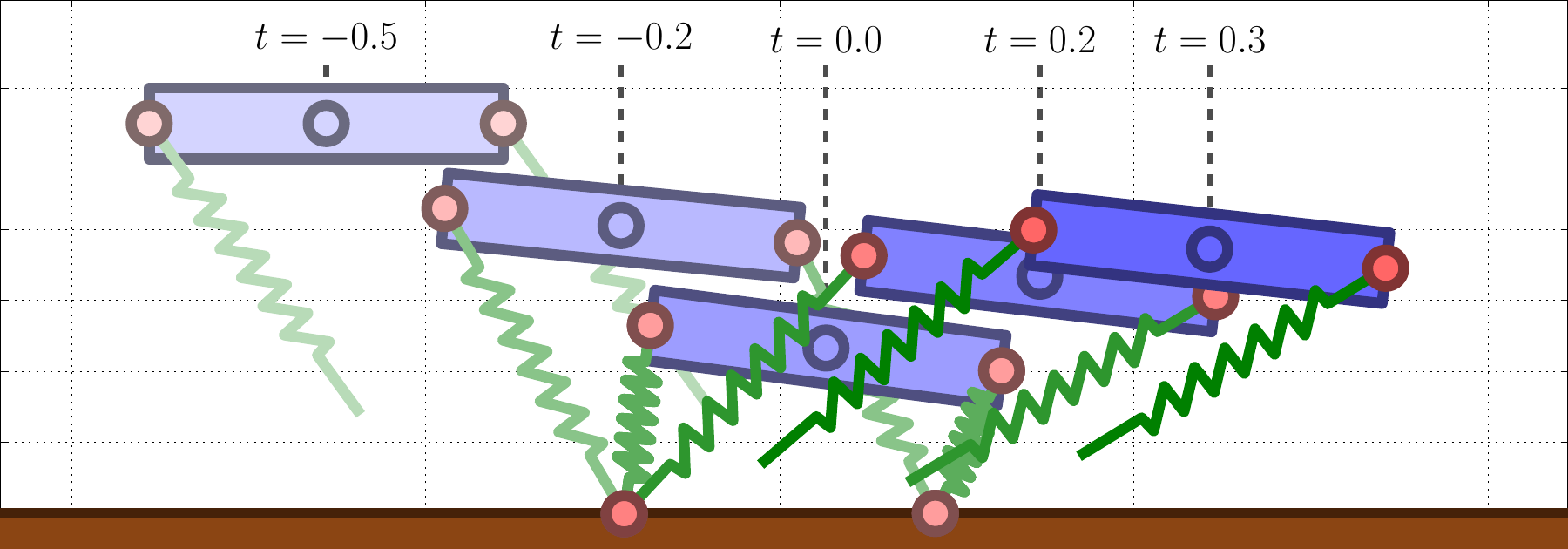}%
  \hfill%
  \mbox{}%
  \caption{%
    Snapshots of \emph{pronk} at discrete transition times
    from initial condition $(x_0,z_0,\theta_0,\dot{x}_0,\dot{z}_0,\dot{\theta}_0) = (0,1.1,0,3.4,0,0)$, parameters $(m,I,k,\ell,d,g,\psi) = (1,1,30,1,1,9.81,\pi/5)$, step size $\stepsymbol = 10^{-3}$, relaxation parameter $\rxsymbol = 10^{-2}$ (left).
    Same as before, but with $\dot{\theta}_0 = -0.4$ (right).
  }
  \label{fig:pronk_seq}
\end{figure*}

The $\dot{\theta}_0 = 0$ trajectory in Fig.~\ref{fig:pronk_grd} clearly demonstrates the need for a simulation algorithm that allows the intersection of multiple transition surfaces.
We emphasize that our state--space metric was necessary to derive a convergent numerical approximation for this execution: since the discrete mode sequence differs for any pair of trajectories arbitrarily close to the $\dot{\theta}_0 = 0$ execution that pass through the interior of $D_l$ and $D_r$, respectively, application of existing trajectory--space metrics~\cite{Tavernini1987, Tavernini2009, SanfeliceTeel2010} would yield a distance larger than unity between the pair.
\label{par:pronk_guard_intersection} Consequently, to the best of our knowledge, no existing provably--convergent numerical simulation algorithm based on a trajectory--space metric is applicable to the $\dot{\theta}_0 = 0$ execution.

Another interesting property of this example is that it is possible to show (by carefully studying the transitions between vector fields through the guards) that the hybrid quotient space ${\cal M}$ is a smooth 6--dimensional manifold near the \emph{pronk} execution, and that the piecewise--defined dynamics yield a continuously--differentiable vector field on this quotient.

\section{Conclusion}
\label{sec:disc}

We developed an algorithm for the numerical simulation of controlled hybrid systems and proved the uniform convergence of our approximations to executions using a novel metrization of the controlled hybrid system's state space.
The metric and the algorithm impose minimal assumptions on the hybrid system beyond those required to guarantee existence and uniqueness of executions.
As a consequence, our algorithm does not require a specialized mechanism to handle overlapping guards or control inputs: a single code (freely available at \texttt{\small \url{http://purl.org/sburden/hssim}}) will accurately simulate any orbitally stable execution of the hybrid system under investigation.
Beyond their immediate utility, it is our conviction that these tools provide a foundation for formal analysis and computational controller synthesis in hybrid systems.

\bibliographystyle{IEEEtran}
\bibliography{refs}

\end{document}